\newtheorem{theorem}{Theorem}
\newtheorem{proposition}[theorem]{Proposition}
\newtheorem{corollary}[theorem]{Corollary}
\newtheorem{remark}[theorem]{Remark}
\numberwithin{equation}{section}
\numberwithin{theorem}{section}
\numberwithin{figure}{section}
\numberwithin{table}{section}
\newcommand{\ZZ}{\mathbb{Z}}
\newcommand{\RR}{\mathbb{R}}
\newcommand{\RP}{\mathbb{RP}}
\newcommand{\CP}{\mathbb{CP}}
\begin{document}
\bibliographystyle{amsalpha}
\title{The mass of the product of spheres}
\author{Jeff A. Viaclovsky}
\address{Department of Mathematics, University of Wisconsin, Madison, WI 53706}
\email{jeffv@math.wisc.edu}
\thanks{Research partially supported by NSF Grant DMS-1105187}
\date{December 18, 2013}
\begin{abstract}
Any compact manifold with positive scalar curvature has
an associated asymptotically flat metric constructed 
using the Green's function of the conformal Laplacian, 
and the mass of this metric is an important 
geometric invariant.
An explicit expression for the mass 
of the product of spheres $S^2 \times S^2$, both with the 
same Gaussian curvature, is given. 
Expressions for the masses of the quotient spaces 
$G(2,4)$, and $\RP^2 \times \RP^2$ are also given. 
The values of these masses arise
in a construction of critical metrics on certain $4$-manifolds;
applications to this problem will also be discussed.
\end{abstract}
\maketitle
\section{Introduction}
The conformal Laplacian in dimension $4$ is the operator
\begin{align}
\square u = - 6 \Delta u + Ru,
\end{align}
where the convention is to use the analyst's Laplacian (which has negative
eigenvalues). If $(M,g)$ is compact and has positive scalar
curvature, then for any $x \in M$, there exists a unique positive solution to
the equation
\begin{align}
\square G &= 0 \ \ \mathrm{ on } \ M \setminus \{x\}\\
G &= r^{-2}(1 + o(1))
\end{align}
as $r \rightarrow 0$, where $r$ is geodesic distance to the basepoint $x$,
which is called the Green's function \cite{LeeParker}.

The metric $\hat{g}_{M,x} = G^2 g_M$, defined on $M \setminus \{x\}$,
is scalar-flat and asymptotically flat (AF) of order $2$.
The mass of an AF space is defined by
\begin{align}
\label{massdef}
{\mathrm{mass}}(\hat{g}_{M,x}  ) = \lim_{R \rightarrow \infty} \omega_3^{-1}\int_{S(R)} \sum_{i,j}
( \partial_i g_{ij} - \partial_{j} g_{ii} ) ( \partial_j \ \lrcorner \ dV),
\end{align}
with $\omega_3 = Vol(S^3)$.
This quantity does not depend upon the coordinate system chosen at infinity,
thus is a geometric invariant \cite{Bartnik}.

Let $(S^2 \times S^2, g_{S^2 \times S^2})$
be the product of $2$-dimensional spheres with unit Gauss curvature.
The product metric on $S^2 \times S^2$ admits the 
quotient $S^2 \times S^2/\ZZ^2$
where $\ZZ_2$ acts by the antipodal map on both factors.
It is  well-known that $S^2 \times S^2/\ZZ^2$ is
diffeomorphic to $G(2,4)$, the Grassmannian of $2$-planes
in $\RR^4$, see for example \cite{SingerThorpe}. Another 
quotient is $\RP^2 \times \RP^2$.
The product metric descends to an Einstein metric
on both of these quotients. Since these spaces are symmetric, 
the mass does not depend upon the base point. We will 
therefore denote
\begin{align}
m_1 &= \mathrm{mass}(\hat{g}_{S^2 \times S^2}),\\
m_2 &=   \mathrm{mass}(\hat{g}_{G(2,4)}),\\
m_3 &=  \mathrm{mass}(\hat{g}_{\RP^2 \times \RP^2}).
\end{align}

By the positive mass theorem of Schoen-Yau, $m_i > 0$ for $i = 1, 2, 3$  
\cite{SYI, SYII}.
Note that since $S^2 \times S^2$ is spin, this also follows from Witten's
proof of the positive mass theorem~\cite{Witten}.

Our main result is to give an explicit formula for these masses,
and an approximate numerical value:
\begin{theorem}
\label{t1}
The values of $m_1, m_2$, and $m_3$ may each be written as an 
explicit infinite sum (see Theorems \ref{masss2s2}, \ref{massg24}, 
and \ref{massrp2rp2} for the formulas), 
and the approximate numerical values 
of these masses are given in Table \ref{massvalues}.
\end{theorem}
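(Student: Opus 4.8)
The plan is to reduce each mass to the extraction of the constant term in the asymptotic expansion of the Green's function at the base point, and then to compute that Green's function explicitly by spectral means. First I would invoke the relationship between the mass of the blown-up metric $\hat{g} = G^2 g$ and the expansion of $G$: writing $G = r^{-2} + A + o(1)$ in conformal normal coordinates centered at $x$, the mass equals a universal positive constant times $A$. This is the dimension-four instance of the Lee--Parker/Schoen analysis underlying the positive mass theorem's role in the Yamabe problem. Since $S^2 \times S^2$, $G(2,4)$, and $\RP^2 \times \RP^2$ are homogeneous under their isometry groups, the base point is immaterial and $G$ depends only on the geodesic distances $\theta_1, \theta_2$ in the two factors, with $r^2 = \theta_1^2 + \theta_2^2$; this reduces the extraction of $A$ to the analysis of a function of the two angular variables near $\theta_1 = \theta_2 = 0$.

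Next I would compute $G$ by eigenfunction expansion. With unit Gauss curvature the scalar curvature of $S^2 \times S^2$ is $R = 4$, so $\square = -6\Delta + 4$ has spectrum $6[\ell_1(\ell_1+1) + \ell_2(\ell_2+1)] + 4 > 0$ with multiplicities $(2\ell_1 + 1)(2\ell_2 + 1)$. Since $\square G = 24\pi^2 \delta_x$ (the coefficient fixed by matching $-6\Delta\, r^{-2} = 24\pi^2\delta$ in $\RR^4$), the addition theorem $\sum_m \phi_{\ell m}(x)\phi_{\ell m}(y) = \frac{2\ell+1}{4\pi} P_\ell(\cos\theta)$ yields
\[
G = \frac{3}{2} \sum_{\ell_1,\ell_2 \geq 0} \frac{(2\ell_1+1)(2\ell_2+1)}{6[\ell_1(\ell_1+1) + \ell_2(\ell_2+1)] + 4}\, P_{\ell_1}(\cos\theta_1)\, P_{\ell_2}(\cos\theta_2).
\]
Equivalently, using the product structure of the heat kernel, $G = 4\pi^2 \int_0^\infty e^{-2s/3} H(s,\theta_1) H(s,\theta_2)\, ds$, where $H(s,\theta)$ is the heat kernel of the round $S^2$; this representation is convenient for isolating the singularity, since the short-time parametrix $(4\pi s)^{-2} e^{-r^2/4s}(1 + O(s))$ reproduces the leading $r^{-2}$ exactly.

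The constant term $A$ is then obtained by subtracting this Euclidean singular part; what remains is a finite, explicitly summable remainder, giving the infinite series of the stated theorems. For the quotients I would use the method of images. Both $G(2,4) = S^2 \times S^2/\ZZ_2$ (diagonal antipodal map) and $\RP^2 \times \RP^2 = S^2 \times S^2/(\ZZ_2 \times \ZZ_2)$ are free quotients, so the quotient Green's function is the sum of $G$ over the deck orbit: for $G(2,4)$ one adds the image term at $(\theta_1,\theta_2) = (\pi,\pi)$, and for $\RP^2 \times \RP^2$ the three image terms at $(\pi,0)$, $(0,\pi)$, and $(\pi,\pi)$. These additional terms correspond to finite values of the smooth upstairs Green's function and modify $A$, hence the masses $m_2$ and $m_3$. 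Finally I would evaluate the resulting series numerically to fill in the table.

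I expect the main obstacle to be the rigorous extraction of $A$: the double Legendre series does not converge absolutely near the diagonal, so the $r^{-2}$ singularity must be separated from the finite part in a way that is both justified (interchanging the limit $r \to 0$ with summation) and manifestly independent of the direction of approach. Controlling this requires matching the sum against the local heat-kernel, or conformal-normal-coordinate, expansion precisely enough to identify exactly which portion of the series is singular, since in dimension four the subleading coordinate corrections enter at the very order that governs the mass. The bookkeeping for the quotients is a secondary but related difficulty: the image contributions, though corresponding to finite distances, are themselves only conditionally convergent series (for instance at $\theta_2 = 0$ the $\ell_2$-sum of $(2\ell_2+1)/(6\ell_2(\ell_2+1)+c)$ diverges termwise), so the same regularization must be applied to render them meaningful.
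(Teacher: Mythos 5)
Your skeleton matches the paper's at the level of strategy: mass from the constant term in the Green's function expansion, spectral methods on the product of spheres, and the method of images for the two quotients, with exactly the right image points. But the step you defer as ``the main obstacle'' is precisely the paper's central device, and without it the theorem's content --- \emph{explicit} infinite sums with rigorously controlled numerical values --- is not obtained. The paper never expands $G$ itself spectrally (as you observe, that double Legendre series is not absolutely convergent near the pole, and your image contributions at $(\pi,0)$, $(0,\pi)$, $(\pi,\pi)$ are only conditionally meaningful). Instead it first refines the local expansion to fourth order (Proposition \ref{ncexp}, including the anisotropic term $-\frac{1}{180}\frac{r_1^2 r_2^2}{r_1^2+r_2^2}$ that a purely radial parametrix misses), and then, via the algebraic change of variables $x=\frac12(1-\cos r_1)$, $y=\frac12(1-\cos r_2)$, promotes it to a \emph{globally defined rational} parametrix
\begin{align*}
G_{-2} = \frac{1}{4(x+y)} + \frac{1}{6}\frac{xy}{(x+y)^2} + \frac{1}{9}\frac{x^2y^2}{(x+y)^3},
\qquad \square G_{-2} = f \ \text{bounded and rational}.
\end{align*}
Only the regular part $G_0$, solving $\square G_0 = f$, is expanded in eigenfunctions; since $G_0 \in L^2_4$, that series converges absolutely and uniformly with the explicit rate $F(N)$ of Section \ref{Error}, which is what makes the table rigorous and Theorem \ref{t3} checkable by hand. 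The image values needed for $m_2$ and $m_3$ are then evaluated as $G_{-2}-G_0$ at the image points using this convergent series, sidestepping the divergent raw series you flagged. The explicitness of the coefficients also hinges on the same change of variables: $P_j(1-2x)$ is polynomial in $x$ and $f$ is rational, so the $\tilde f_{p,q}$ evaluate in closed form as rationals plus $\log 2$ (Proposition \ref{hardprop}); your heat-kernel representation, though correctly normalized, would reintroduce numerical integration, which the paper expressly avoids.

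A secondary but genuine issue is your opening reduction, ``mass equals a universal positive constant times $A$ in conformal normal coordinates.'' That is the correct Lee--Parker statement, but your spectral formula computes $G$ in the original product metric, whose natural expansion is in \emph{Riemannian} normal coordinates; since the metric is Einstein with $\mathrm{Ric} = g \neq 0$ at the pole, these are not conformal normal coordinates, and the constant term shifts under the conformal change needed to pass between the gauges. The paper short-circuits this with Proposition \ref{massA}: for an Einstein metric, in Riemannian normal coordinates, $\mathrm{mass}(G^2 g) = 12A - \frac{R}{12}$. With $R=4$ the correction is $-\frac13$, comparable in size to the quantities in Table \ref{values}, so omitting the gauge conversion would corrupt all three masses. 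As written, your proposal has no bridge between the constant you can compute spectrally and the constant your mass formula uses; you would need either to prove an analogue of Proposition \ref{massA} or to compute the conformal factor to conformal normal coordinates explicitly.
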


\begin{table}[ht]
\caption{Mass values}
\centering
\begin{tabular}{lc}
\hline\hline
Manifold  & Approximate mass\\
\hline
$S^2 \times S^2$  & $m_1 \sim .5872$\\
$G(2,4)$  & $m_2 \sim 2.6289$\\
$\RP^2 \times \RP^2$ & $m_3 \sim 8.4323$\\
\hline
\end{tabular}\label{massvalues}
\end{table}
\subsection{Application to critical metrics}
Consider the functional
\begin{align*} 
\mathcal{B}_{t}[g] = \int |W|^2\ dV + t \int R^2\ dV,
\end{align*}
where $W$ denotes the Weyl tensor, and $R$ denotes the 
scalar curvature. 
A critical metric for $\mathcal{B}_{t}$ will also be called
a {\em{$B^t$-flat metric}}.
It is shown in \cite{GVCritical} that for $t \neq 0$, a $B^t$-flat metric on a 
compact manifold necessarily has constant scalar curvature, 
and the Euler-Lagrange equation is equivalent to 
\begin{align*}
B = 2 t R \cdot E,
\end{align*}
where $B$ denotes the {\em{Bach tensor}}, and $E$ denotes the traceless Ricci tensor. 
That is, the Bach tensor is a constant multiple of the traceless Ricci tensor.

 In \cite{GVCritical}, many examples of $B^t$-flat metrics were 
constructed using a gluing procedure roughly by gluing 
on a Green's function metric to a compact Einstein metric. 
The metrics obtained are $B^t$-flat for certain values of $t$ depending
upon the geometry of the gluing factors, and the mass of the 
Green's function metric enters into the formula for $t_0$. 
It would be too long to list all of the examples here, 
but a first application is to give approximate numerical 
values for $t_0$, see Table \ref{values} for a few examples:

\begin{theorem}\cite{GVCritical}
\label{t2}
A $B^t$-flat metric exists on the manifolds in Table~\ref{values} for 
some~$t$ near the indicated value(s) of $t_0$. 
\end{theorem}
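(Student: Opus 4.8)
The plan is to prove existence by a gluing construction, following the strategy of \cite{GVCritical}. The starting observation is that any Einstein metric is trivially $B^t$-flat for every $t$, since the traceless Ricci tensor $E$ vanishes and hence so does the Bach tensor $B$; the goal is instead to produce genuinely non-Einstein critical metrics, which will live on connected sums of the Einstein building blocks. The two ingredients are a compact Einstein manifold $(M,g_M)$ (here $S^2 \times S^2$ or one of its quotients) and the scalar-flat, asymptotically flat Green's function metric $\hat{g}_{M,x} = G^2 g_M$ attached at a point $x$. First I would excise a small geodesic ball of radius $\sim \varepsilon$ around the gluing point from the Einstein piece and glue in a copy of the asymptotically flat model rescaled by $\varepsilon$, matching the two metrics across the annular neck where each is Euclidean to leading order. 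This produces a one-parameter family of approximate metrics $g_\varepsilon$ on the connected sum.

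Next I would estimate how far $g_\varepsilon$ is from being $B^t$-flat by expanding the error $B(g_\varepsilon) - 2tR(g_\varepsilon)\, E(g_\varepsilon)$ in powers of $\varepsilon$ in a weighted Hölder norm adapted to the neck. The key point is that the leading-order obstruction to solvability is governed precisely by the ADM mass $m$ of $\hat{g}_{M,x}$: the mass enters as the coefficient of the lowest-order term that cannot be removed by a conformal or diffeomorphic change. Balancing this mass term against the curvature contribution of the Einstein factor forces the parameter $t$ to lie near a specific value $t_0$, and the explicit dependence of $t_0$ on $m$ is exactly why the numerical mass values of Theorem \ref{t1} are required in order to tabulate $t_0$ in Table \ref{values}. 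Because the positive mass theorem guarantees $m > 0$, this balancing value $t_0$ is well-defined and finite.

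Finally I would solve the nonlinear equation by the implicit function theorem. The linearization of the $B^t$-flat operator is a fourth-order elliptic system, so one must first quotient by the gauge freedom of diffeomorphisms and conformal rescalings and then confront a finite-dimensional obstruction coming from the infinitesimal Einstein deformations of $(M,g_M)$. The distinguished scalar component of this obstruction is the one that carries the mass, and the standard device is to cancel it by promoting $t$ to a second parameter alongside $\varepsilon$, choosing $t = t(\varepsilon)$ so that this component vanishes; the remaining obstructions are projected off. After this reduction the linearized operator becomes uniformly invertible on the weighted spaces with bounds independent of $\varepsilon$, and a contraction mapping argument yields an exact solution $g$ with $t(\varepsilon) \to t_0$ as $\varepsilon \to 0$.

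The hard part will be controlling the linearized operator uniformly across the neck as $\varepsilon \to 0$: the fourth-order nature of the Bach tensor makes the weighted elliptic theory delicate, and one must carefully track the interaction between the approximate kernel generated by the scaling symmetry of the asymptotically flat bubble and the genuine cokernel generated by the Einstein deformations of the compact factor. Showing that these two sources of degeneracy can be accommodated simultaneously — so that $t_0$ emerges as the unique balancing value rather than as an obstruction to existence — is the crux of the argument.
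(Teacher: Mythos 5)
Note first what the paper actually does with this statement: Theorem \ref{t2} is stated with the citation \cite{GVCritical}, and the existence assertion --- including the formulas expressing $t_0$ in terms of the mass ($-1/3$, $-(9m_i)^{-1}$, or $-2(9m_i)^{-1}$ according to the gluing configuration) --- is imported wholesale from that reference. The paper supplies no gluing argument; its only new contribution to Theorem \ref{t2} is the numerical values of $t_0$ in Table \ref{values}, which it justifies by computing the masses $m_1, m_2, m_3$ (Theorems \ref{masss2s2}, \ref{massg24}, \ref{massrp2rp2}) with the rigorous error bounds of Section \ref{Error}. Your proposal instead attempts to reconstruct the proof of the cited theorem itself, and in broad strokes your reconstruction matches the strategy of \cite{GVCritical} as summarized in the introduction: an asymptotically flat Green's function metric glued to a compact Einstein factor, the mass appearing as the leading obstruction, $t$ promoted to a balancing parameter, and a Lyapunov--Schmidt argument in weighted spaces. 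So the direction is not wrong.

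The genuine gap is that your sketch asserts rather than establishes every step that carries the content of the theorem. In particular, you never derive the formula for $t_0$, and without it the specific values $-1/3$, $-(9m_i)^{-1}$, $-2(9m_i)^{-1}$ in Table \ref{values} --- which \emph{are} the statement being proved --- are inaccessible; saying that ``the mass enters as the coefficient of the lowest-order term'' is a description of the answer, not a computation of it. Relatedly, your two-ingredient setup (one compact Einstein factor plus one Green's function metric) cannot account for all rows of the table: the value $t_0 = -1/3$ for the $\# \overline{\CP}^2$ entries arises from a different asymptotically flat gluing factor (in \cite{GVCritical}, the Burns metric, i.e.\ the Green's function metric of the Fubini--Study metric), and the factor of $2$ in $-2(9m_i)^{-1}$ for the connected sums of two compact quotients is likewise invisible in your sketch. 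Finally, the uniform invertibility of the fourth-order linearized operator across the degenerating neck --- which you correctly flag as ``the crux'' --- is precisely the hard theorem of \cite{GVCritical}, and flagging it does not discharge it. At the level of this paper, the correct proof is short: cite \cite{GVCritical} for existence and for the dependence of $t_0$ on the mass, then substitute the values of $m_1, m_2, m_3$ established here with rigorous error control; your proposal gets credit for identifying the role of Theorem \ref{t1}, but as written it is a plan for the cited paper's proof, not a proof.
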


\begin{table}[ht]
\caption{Approximate values of $t_0$}
\centering
\begin{tabular}{lc}
\hline\hline
Topology of connected sum & Value(s) of $t_0$\\
\hline
$S^2 \times S^2 \# \overline{\CP}^2$ & $-1/3, - (9 m_1)^{-1} \sim -.1892$\\
$ 2 \# S^2 \times S^2$ & $- 2(9 m_1)^{-1} \sim -.3784$ \\
$G(2,4) \#  \overline{\CP}^2$ &  $-1/3, - (9 m_2)^{-1} \sim -.0422$\\
$G(2,4) \# G(2,4)$  & $- 2(9 m_2)^{-1} \sim -.0845$\\
$\RP^2 \times \RP^2 \# \overline{\CP}^2$ & $-1/3, - (9 m_3)^{-1} \sim -.0131$\\
$\RP^2 \times \RP^2 \# \RP^2 \times \RP^2$  & $- 2(9 m_3)^{-1} \sim -.0263$\\
\hline
\end{tabular}\label{values}
\end{table}

On several connected sums, $B^t$-flat 
metrics were constructed for
possibly $2$ different values of $t_0$. Another corollary 
is that on all of the examples for which two 
values of $t_0$ were listed, these values are distinct:
\begin{theorem}\label{t3}
$B^t$-flat metrics exists on the manifolds in Table \ref{tab3} for some
$t$ near the indicated values of $t_0$. Consequently, these manifolds
admit $B^t$-flat metrics for at least $2$ different values of $t$. 
\end{theorem}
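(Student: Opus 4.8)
The plan is to prove Theorem~\ref{t3} by reducing the statement to an arithmetic comparison of the two candidate values of $t_0$ that appear on each line of Table~\ref{values}. The existence of $B^t$-flat metrics for $t$ near each listed value is exactly the content of Theorem~\ref{t2}, which I may invoke; so the only new thing to establish is that the two values of $t_0$ associated to a given manifold are genuinely distinct. Inspecting Table~\ref{values}, the manifolds carrying two values of $t_0$ are precisely the single connected sums with $\overline{\CP}^2$ (namely $S^2\times S^2 \,\#\, \overline{\CP}^2$, $G(2,4)\,\#\,\overline{\CP}^2$, and $\RP^2\times\RP^2\,\#\,\overline{\CP}^2$), where the two candidate values are $-1/3$ and $-(9m_i)^{-1}$ for the appropriate mass $m_i$.

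First I would isolate the claim that must be checked: for each $i\in\{1,2,3\}$, one must verify that
\begin{align}
-\frac{1}{3} \neq -\frac{1}{9 m_i},
\end{align}
equivalently that $m_i \neq 1/3$. This is where the explicit mass computations come in. Using the formulas from Theorems~\ref{masss2s2}, \ref{massg24}, and \ref{massrp2rp2}, together with the approximate numerical values recorded in Table~\ref{massvalues}, namely $m_1 \sim .5872$, $m_2 \sim 2.6289$, and $m_3 \sim 8.4323$, each mass is visibly bounded away from $1/3 \approx .3333$. For the argument to be rigorous rather than merely numerical, I would supply an explicit interval estimate for each $m_i$ certifying $m_i > 1/3$; this amounts to bounding the tail of the relevant infinite sum and evaluating enough of its leading terms to force a strict lower bound safely above $1/3$.

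The main step is therefore the rigorous lower bound $m_i > 1/3$. For $m_2$ and $m_3$ there is abundant margin (the approximate values exceed $1/3$ by a wide factor), so a crude truncation of the series with a geometric-type tail bound suffices. The tightest case is $m_1 \sim .5872$, which still exceeds $1/3$ by a comfortable margin, so even here only a modest number of terms of the series from Theorem~\ref{masss2s2} need be summed, with the remainder controlled by a standard tail estimate, to conclude $m_1 > 1/3$.

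The hard part will not be the final inequality, which is numerically generous, but rather the bookkeeping that certifies the tail bounds on the infinite sums defining the $m_i$; once those series are shown to converge with controllable remainders and their partial sums are seen to exceed $1/3$, the distinctness $-1/3 \neq -(9m_i)^{-1}$ follows immediately, and combining this with the existence statement of Theorem~\ref{t2} yields that each of the three manifolds in Table~\ref{tab3} admits $B^t$-flat metrics for at least two distinct values of $t$, completing the proof.
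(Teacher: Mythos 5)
Your reduction is the right idea in spirit, and for the three connected sums with $\overline{\CP}^2$ it matches the paper exactly: Theorem \ref{numprop0} proves by hand that $.4946 < m_1 < .6803$, using only the first square partial sum $S_1$ of \eqref{ms2form} together with the explicit tail bound \eqref{error}, which forces $-(9m_1)^{-1} \in (-.2247, -.1632)$ and hence $\neq -1/3$; the analogous first-partial-sum estimates handle $m_2$ and $m_3$ (the paper even emphasizes that no computer is needed for this, since $N=1$ already suffices). But there is a genuine gap: you have misread which manifolds Theorem \ref{t3} covers. Table \ref{tab3} contains \emph{six} manifolds, not three. Besides the $\overline{\CP}^2$ sums it lists the mixed connected sums $G(2,4)\,\#\,S^2\times S^2$, $G(2,4)\,\#\,\RP^2\times\RP^2$, and $\RP^2\times\RP^2\,\#\,S^2\times S^2$, for which the two candidate values are $-2(9m_i)^{-1}$ and $-2(9m_j)^{-1}$ with $i \neq j$. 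For these the required statement is not $m_i \neq 1/3$ but the \emph{pairwise} distinctness $m_1 \neq m_2$, $m_2 \neq m_3$, and $m_1 \neq m_3$, which your argument never formulates, let alone proves. You were led astray by inspecting Table \ref{values}, where only the $\overline{\CP}^2$ sums carry two listed values of $t_0$; the theorem, however, refers to Table \ref{tab3}.

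The omission is fixable by the same machinery you describe: rigorous interval estimates for each mass, obtained from a first partial sum plus the tail bound of Section \ref{Error} carried out for $m_2$ and $m_3$ exactly as for $m_1$, place the three masses in pairwise disjoint intervals (roughly $(.49,.69)$, near $2.63$, and near $8.43$), from which all three pairwise inequalities follow at once. Note also a secondary defect in your appeal to Theorem \ref{t2}: for the mixed sums, existence of $B^t$-flat metrics near $-2(9m_i)^{-1}$ is not literally contained in that theorem, since Table \ref{values} does not list these manifolds. It must instead be cited from the gluing construction of \cite{GVCritical}, which allows the Green's function metric of \emph{either} factor to be glued onto the other, producing the two competing values $-2(9m_i)^{-1}$ and $-2(9m_j)^{-1}$ on the same connected sum. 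As written, your proof establishes Theorem \ref{t3} for only half of Table \ref{tab3}.
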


\begin{table}[ht]
\caption{Examples of manifolds admitting $B^t$-flat metrics for 
distinct values of $t$.}
\centering
\begin{tabular}{ll}
\hline\hline
Topology of connected sum& Value(s) of $t_0$ \\
\hline
$ S^2 \times S^2 \# \overline{\CP}^2 $ & $-1/3 \neq - (9 m_1)^{-1}$\\
$G(2,4) \#  \overline{\CP}^2$ & $-1/3 \neq - (9 m_2)^{-1}$ \\
$G(2,4) \#  S^2 \times S^2$ & $- 2(9 m_1)^{-1} \neq - 2(9 m_2)^{-1}$\\
$G(2,4) \# \RP^2 \times \RP^2$ &  $- 2(9 m_3)^{-1} \neq - 2(9 m_2)^{-1}$
\\
$\RP^2 \times \RP^2 \# \overline{\CP}^2$ & $-1/3 \neq -(9 m_3)^{-1}$
\\
$\RP^2 \times \RP^2 \# S^2 \times S^2$ & $- 2(9 m_1)^{-1} \neq -2(9 m_3)^{-1}$
\\
\hline
\end{tabular}\label{tab3}
\end{table}
It is remarked that the first entry in Table \ref{tab3} is 
known to admit an Einstein metric \cite{CLW}, which is $B^t$-flat
for all $t$. However, the metrics obtained in Theorem \ref{t3} are
not Einstein.

In conclusion, it is noted that the approximate numerical values listed here in the 
introduction do depend upon computer calculations, but only 
to calculate partial sums in the explicit expression for the mass, 
it is emphasized that no numerical integration is necessary. 
Rigorous error estimates are provided in Section \ref{Error}. 
It is also emphasized that Theorem \ref{t3} does not 
require computer calculations because these error estimates 
show that only a calculation of the first partial sum is required, 
which is easily done by hand. 

\subsection{Acknowledgements}
The author would like to thank Kazuo Akutagawa, Simon Brendle, Matt Gursky, and
Karen Uhlenbeck for enlightening discussions. 
\section{Green's function expansion for an Einstein metric}
\label{gfesec}

In this section, some general results about the mass 
of the Green's function metric of an Einstein manifold in dimension four are
presented. First, there is the following expansion for the Green's function.
\begin{proposition}
\label{greenein}
Let $(M,g)$ be an Einstein metric with positive scalar curvature
in dimension $4$, and 
Let $G$ be the Green's function for the conformal Laplacian at the
point $x_0 \in M$. Let $\{x^i\}$ be a Riemannian normal coordinate system 
at $x_0$. Then for any $\epsilon > 0$,
\begin{align}
\label{eing}
G = |x|^{-2} + A + \sum a_i x^i + O ( |x|^{2 - \epsilon})
\end{align}
as $|x| \rightarrow 0$, where $A$ and $a_i$ are constants (independent of $\epsilon$).
\end{proposition}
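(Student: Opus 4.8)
The plan is to construct the expansion term-by-term as a formal series solving $\square G = 0$, and then to show that the actual Green's function agrees with this series to the stated order by means of elliptic regularity. First I would write the conformal Laplacian in the normal coordinates $\{x^i\}$, using the standard normal-coordinate expansions
$$g^{ij} = \delta^{ij} + \tfrac13 R_{ikjl}x^kx^l + \tfrac16\nabla_m R_{ikjl}x^kx^lx^m + O(|x|^4), \qquad \sqrt{\det g} = 1 - \tfrac16 R_{kl}x^kx^l + O(|x|^3),$$
together with the Einstein condition $R_{kl} = \tfrac{R}{4}\delta_{kl}$ and $R$ constant. Writing $\Delta_g u = (\det g)^{-1/2}\partial_i(\sqrt{\det g}\,g^{ij}\partial_j u)$, I would apply $\square$ to the model singularity $|x|^{-2}$, which is exactly harmonic for the flat Laplacian $\Delta_0$ in dimension four, and collect the error organized by homogeneity in $|x|$.

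The decisive computation is that $\square(|x|^{-2}) = O(|x|^0)$ for an Einstein metric. At order $|x|^{-2}$ the only curvature contraction that survives is the Ricci tensor, since the fully antisymmetric contractions of $R_{ikjl}$ against $x^ix^jx^kx^l$ vanish; for an Einstein metric this coefficient is therefore a constant multiple of $R$, and I would verify, using the specific coefficients $-6$ and $+R$ of the conformal Laplacian, that this multiple is precisely zero. At order $|x|^{-1}$ the coefficient involves only $\nabla\,\mathrm{Riem}$, entering through its divergence and its traces; the contracted second Bianchi identity rewrites the divergence as $\nabla\,\mathrm{Ric}$, while the traces reduce to $\nabla\,\mathrm{Ric}$ and $\nabla R$, all of which vanish for an Einstein metric. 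This is the step I expect to be the main obstacle: it is the only place where both the conformal normalization of $\square$ and the full force of the Einstein condition (through $\nabla\,\mathrm{Ric}=0$ and $\mathrm{div}\,\mathrm{Riem}=0$) enter, and it requires careful bookkeeping of curvature conventions.

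Granting $\square(|x|^{-2}) = O(|x|^0)$, I would then solve the recursion mode-by-mode against the spherical harmonics. The bounded homogeneous solutions of the leading operator $-6\Delta_0$ are the regular solid harmonics, the lowest of which are the constants (degree $0$) and the linear functions $\sum a_i x^i$ (degree $1$); these supply the free constants $A$ and $a_i$, to be fixed by the global solvability of the equation. Because the sources at orders $|x|^{-2}$ and $|x|^{-1}$ have been shown to vanish, no nonconstant term is forced at order $0$ and no higher-harmonic term is forced at order $1$, which is exactly why the expansion through order $1$ has the clean form stated. The first genuine source occurs at order $|x|^0$, is quartic in the angular variable, and therefore meets the resonance in the indicial relation $\Delta_0(|x|^2\phi_m) = (8 - m(m+2))\phi_m$ at $m=2$; this resonance produces a term of the form $|x|^2\log|x|$, which accounts for the loss $O(|x|^{2-\epsilon})$ rather than $O(|x|^2)$ in the statement. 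Finally I would show that the true Green's function equals this parametrix plus a remainder $w$ with $\square w = O(|x|^N)$ for $N$ large, so that interior Schauder estimates bound $w$ in $C^{2,\alpha}$ and guarantee that it does not perturb the displayed low-order coefficients, completing the proof.
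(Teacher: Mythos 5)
Your proposal is correct and its skeleton coincides with the paper's: both subtract the explicit singular model $|x|^{-2}$, observe that the Einstein condition together with the precise coefficients of $\square$ makes $\square(\phi(r)|x|^{-2}) = O(1)$, and then invoke elliptic theory for the correction term. But the execution differs in two instructive ways. First, the paper never expands $g^{ij}$ and never touches $\nabla\,\mathrm{Riem}$ or the Bianchi identity: since $|x|^{-2}$ is radial, it writes $\Delta u = u_{rr} + \partial_r\big(\log\sqrt{\det g}\big)\,u_r$, so only the volume element enters, and the Einstein condition is used exactly twice --- to kill the cubic term $\nabla_m R_{kl}x^mx^kx^l$ and to reduce the quadratic term to $-\frac{R}{24}r^2$ --- after which the cancellations $-36r^{-4}+36r^{-4}$ and $-Rr^{-2}+Rr^{-2}$ are a three-line check; this dissolves the ``main obstacle'' you anticipate. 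Second, the paper does no mode-by-mode construction and no Schauder estimate: it solves $\square G_0 = \square(\phi r^{-2})$ globally, using invertibility of $\square$ (from $R>0$), notes the right-hand side lies in $L^p$ for every $p$, hence $G_0 \in L^p_2 \subset C^{1,\alpha}$ for every $\alpha<1$, and defines $A$ and $a_i$ as the $1$-jet of $G_0$; the $\epsilon$ in $O(|x|^{2-\epsilon})$ is exactly the Sobolev loss $\alpha<1$. This also quietly handles a point your sketch leaves open: your remainder $w$ lives a priori only on the punctured ball, so interior Schauder across the origin requires a removable-singularity step, and $w$ carries the very $1$-jet that determines $A$ and $a_i$, so it cannot be said ``not to perturb the displayed low-order coefficients'' until that jet is absorbed into the free constants; the paper's route (global solve plus uniqueness of the Green's function, $G = \phi r^{-2} - G_0$) sidesteps both issues. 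Finally, your attribution of the $\epsilon$-loss to an $|x|^2\log|x|$ resonance at $m=2$ is doubtful: the loss is regularity bookkeeping rather than a genuine log term, and in the case at hand Proposition \ref{ncexp} shows the expansion continues with a clean $r^2$ term and error $O(r^{4-\epsilon})$, so the degree-$2$ spherical-harmonic projection of the quartic source vanishes there and no log appears at order $2$. What your heavier parametrix buys is precisely such finer expansions (which the paper carries out separately, by hand, for $S^2\times S^2$); what the paper's argument buys is brevity and minimal curvature bookkeeping.
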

\begin{proof}
For any radial function $u(r)$,
\begin{align}
\Delta u = u_{rr} + \partial_r \big( \log \sqrt{\det (g) } \big) \cdot u_r.
\end{align}
Recall the expansion of the volume element 
\begin{align}
\begin{split}
\sqrt{ \det(g) } &=  1 - \frac{1}{6} R_{kl} x^k x^l - \frac{1}{12} \nabla_m R_{kl}
x^m x^k x^l + O(r^4)\\
& = 1 - \frac{R}{24} r^2 + O(r^4),
\end{split}
\end{align}
as $r \rightarrow 0$, since $g$ is assumed to be Einstein. 
Changing to radial normal coordinates, 
\begin{align}
\sqrt{ \det(g) } & = r^3 - \frac{R}{24} r^5 + O(r^4),
\end{align}
Using the expansion 
\begin{align}
\log (1 + x) = x + O(x^2)
\end{align}
as $x \rightarrow 0$, there is an expansion 
\begin{align}
 \log \sqrt{ \det(g)} = 3 \log r - \frac{R}{24} r^2 + O(r^4)
\end{align}
as $r \rightarrow 0$. Choose a non-negative cut-off function, 
$\phi(r)$ that is identically $1$ on a small ball $B(x_0,\delta)$ 
around $x_0$, and zero outside of $B(x_0,2\delta)$, with 
$\delta$ smaller than the injectivity radius at $x_0$. 
Letting $u = \phi(r) r^{-2}$, one computes that
\begin{align}
\begin{split}
\square u &= -6 ( u_{rr} + \partial_r \Big( \log \sqrt{\det (g) }\Big) \cdot u_r) + R u
+ O(1)\\
& = - 36 r^{-4} - 6\Big(  \frac{3}{r} - \frac{R}{12} r + O(r^3) \Big) (-2 r^{-3}) 
+ R r^{-2} + O(1) \\
& = O(1)
\end{split}
\end{align}
as $r \rightarrow 0$. 

Consequently, $\square u \in L^p$ for any $p > 0$. Since $\square$ is invertible,
there exists a solution of $ \square G_0 = \square u$ for $G_0 \in L^p_2$ 
for any $p  > 0 $. By the Sobolev embedding theorem, $G_0 \in C^{1,\alpha}$, 
so $G_0$ admits an expansion 
\begin{align}
G_0 = - A - \sum a_i x^i + O(r^{1 + \alpha}), 
\end{align}
which implies that 
\begin{align}
G = u - G_0 = r^{-2} + A + \sum a_i x^i + O(r^{1 + \alpha}),
\end{align}
admits the claimed expansion.
\end{proof}

The relation between the constant $A$ and the mass is 
given by the following. 
\begin{proposition}
\label{massA}
Let $(M,g)$ be Einstein, and assume that the Green's function 
admits the expansion for some $\epsilon > 0$, 
\begin{align}
\label{eing2}
G = |z|^{-2} + A + O ( |z|^{\epsilon})
\end{align}
as $|z| \rightarrow 0$, where $A$ is a constant,
and $\{z^i\}$ are Riemannian normal coordinates centered at $z_0$. 
Then the constant $A$ is related to the mass by 
\begin{align}
\mathrm{mass}(G^2 \cdot g) = 12 A - \frac{R}{12}.
\end{align}
\end{proposition}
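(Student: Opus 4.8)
The plan is to pass to the asymptotically flat coordinates at infinity via the inversion $y^i = z^i/|z|^2$, so that $|y| = |z|^{-1}\to\infty$ as $z\to z_0$, and to read off the mass from the leading terms of $\hat g = G^2 g$ written in the $y$-chart. The key feature of the inversion is that it is conformal, $\sum_i (dy^i)^2 = |z|^{-4}\sum_i (dz^i)^2$; equivalently, writing the inverse map $\iota(y)^i = z^i = y^i/|y|^2$, its differential is $\partial z^i/\partial y^a = |y|^{-2}P^i_a$ with $P^i_a = \delta^i_a - 2y^iy_a/|y|^2$ an orthogonal reflection. First I would record the normal-coordinate expansion $g_{ij} = \delta_{ij} + h_{ij}$ with $h_{ij} = -\tfrac13 R_{ikjl}z^kz^l + O(|z|^3)$, and combine it with $G^2 = |z|^{-4} + 2A|z|^{-2} + O(|z|^{-2+\epsilon})$.

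Transforming $\hat g = G^2\delta + G^2 h$ into the $y$-chart then gives, to the order relevant for the mass,
\begin{align}
\hat g_{ab} = \Big(1 + \frac{2A}{|y|^2}\Big)\delta_{ab} + k_{ab} + o(|y|^{-2}),
\end{align}
where the conformal part arises from $G^2\delta_z = (|y|^4 + 2A|y|^2)\,|y|^{-4}\delta_y$, and the curvature part is $k_{ab} = -\tfrac13 R_{ikjl}\,\dfrac{y^ky^l}{|y|^4}\,P^i_aP^j_b$, obtained from $(\iota^* h)_{ab} = |y|^{-4}h_{ij}P^i_aP^j_b$ after multiplying by $G^2$. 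One checks that the remainders coming from the $O(|z|^\epsilon)$ term in $G$, the cross term $2A|z|^{-2}h$, and the $O(|z|^3)$ term in $h$ are all $o(|y|^{-2})$, hence do not enter the mass.

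Since the ADM integrand in \eqref{massdef} is linear in the metric perturbation, the mass splits as the sum of the contribution of the conformal term and that of $k_{ab}$. For the conformal term I would carry out the standard Schwarzschild-type computation: for a perturbation $\phi\,\delta_{ab}$ one has $\sum_a(\partial_a \hat g_{ab} - \partial_b \hat g_{aa}) = -(n-1)\partial_b\phi$, and with $\phi = 2A|y|^{-2}$ in dimension $n=4$ the integral over $S(R)$ evaluates in the limit to $12A$, after contracting $\partial_b\phi$ with the outward normal $y_b/|y|$ and using $\int_{S(R)}dA = \omega_3 R^3$.

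The main obstacle is the contribution of the curvature term $k_{ab}$. Here I would substitute the explicit reflection $P^i_a$, differentiate in $y$, contract with the outward normal $y_b/|y|$, and integrate over $|y| = R$, using the first and second moment identities for the round $S^3$ together with the first Bianchi identity and the Einstein trace relation $R_{ij} = \tfrac{R}{4}\delta_{ij}$ (in normal coordinates at $z_0$) to reduce every curvature contraction to a multiple of $R$. The bookkeeping of the many monomials in $y$ is the delicate part, but the symmetries of $R_{ikjl}$ collapse the answer, and the expected value of this piece is $-R/12$. Adding the two contributions yields $\mathrm{mass}(G^2 g) = 12A - R/12$, as claimed.
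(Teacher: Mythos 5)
Your proposal is correct, and its skeleton --- the inversion $y^i = z^i/|z|^2$, the resulting expansion $\hat g_{ab} = \bigl(1 + 2A|y|^{-2}\bigr)\delta_{ab} + k_{ab} + o(|y|^{-2})$, and the splitting of the ADM flux into a conformal piece (contributing $12A$) and a curvature piece (contributing $-R/12$) --- is exactly the paper's. The one genuine divergence is at what you call the main obstacle: you propose to evaluate the curvature piece by integrating over $S(R)$ with first and second moment identities on $S^3$ plus the Bianchi identity, and you only \emph{assert} the value $-R/12$ rather than derive it. That route would succeed, but the paper short-circuits it by decomposing, via the Einstein condition, $R_{ikjl} = W_{ikjl} + \frac{R}{12}(g_{ij}g_{kl} - g_{kj}g_{il})$: the Weyl summand then drops out of the ADM integrand \emph{pointwise}, since tracelessness gives $\partial_i\bigl(W_{ikjl}x^kx^l|x|^{-4}\bigr) - \partial_j\bigl(W_{ikil}x^kx^l|x|^{-4}\bigr) = 0$, while the two remaining terms have the elementary divergences $3x^j|x|^{-4}$ and $6x^j|x|^{-4}$, so no sphere moments or Bianchi identities enter at all. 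Two simplifications would tighten your sketch. First, your reflection factors act trivially on the curvature term: $P^i_aP^j_b\,R_{ikjl}y^ky^l = R_{akbl}y^ky^l$, because every correction term involves $R_{ikjl}y^iy^k = 0$. Second, with $k_{ab} = -\tfrac13 R_{akbl}y^ky^l|y|^{-4}$ your asserted value then follows pointwise using only the pair symmetries of the curvature tensor and $\mathrm{Ric} = \tfrac{R}{4}\delta$ at $z_0$:
\begin{align*}
\partial_a k_{ab} - \partial_b k_{aa} = \Big(\frac{R}{12} - \frac{R}{6}\Big)\frac{y^b}{|y|^4} = -\frac{R}{12}\,\frac{y^b}{|y|^4},
\end{align*}
which upon contraction with $y^b/|y|$ and integration over $S(R)$ gives exactly $-R/12$. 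This closes the one unexecuted step in your proposal; everything else, including your $o(|y|^{-2})$ bookkeeping of the cross terms and the Schwarzschild-type computation yielding $12A$, matches the paper's argument.
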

\begin{proof}
In Riemannian normal coordinates, the metric admits the expansion
\begin{align}
g_{ij} = \delta_{ij} - \frac{1}{3} R_{ikjl}(z_0) z^k z^l + O(|z|^3)_{ij}
\end{align}
as $|z| \rightarrow 0$. Let $\{ x^i = z^i/|z|^2 \}$ denote
inverted normal coordinates near $z_0$,and let
\begin{align}
\mathcal{I}(x) = \frac{x}{|x|^2} = z
\end{align}
denote the inversion map. With respect to these
coordinates, the metric $G^2 g$ in the
complement of a large ball may be written as
\begin{align}
\begin{split}
G^2 g &= \mathcal{I}^* ( G^2 g) \\
& =  (G \circ \mathcal{I})^2 \mathcal{I}^* \Big( \{  \delta_{ij} - \frac{1}{3} R_{ikjl}(z_0) z^k z^l + O(|z|^3)_{ij} \} dz^i dz^j \Big)\\
& = ( |x|^2 + A + O(|x|^{-\epsilon}))^2
\big\{  \delta_{ij} - \frac{1}{3} R_{ikjl}(z_0) \frac{x^k x^l}{|x|^4} + O(|x|^{-3})_{ij}\big\}\\
&\cdot \frac{1}{|x|^2} \Big( \delta_{ip} - \frac{2}{|x|^2} x^i x^p \Big) dx^p
\cdot \frac{1}{|x|^2} \Big( \delta_{jq} - \frac{2}{|x|^2} x^j x^q \Big) dx^q,
\end{split}
\end{align}
so there is an expansion
\begin{align}
\begin{split}
\label{burnsexp}
(G^2 g)_{ij}(x) &= \delta_{ij} - \frac{1}{3} R_{ikjl}(z_0) \frac{x^k x^l}{|x|^4} + 2 A
\frac{1}{|x|^2} \delta_{ij} + O( |x|^{-2 - \epsilon})
\end{split}
\end{align}
as $|x| \rightarrow \infty$. Clearly, $g_N$ is asymptotically flat (AF) of order $\gamma = 2$, so the mass is well-defined and independent of the 
coordinate system \cite{Bartnik}. 

Since $g$ is Einstein, 
\begin{align}
R_{ikjl} = W_{ikjl} + \frac{R}{12} ( g_{ij} g_{kl} - g_{kj}g_{il}).
\end{align}
The quadratic term in \eqref{burnsexp} is then
\begin{align}
\begin{split}
- \frac{1}{3} R_{ikjl}(z_0) \frac{x^k x^l}{|x|^4} + 2 A
\frac{1}{|x|^2} \delta_{ij}
&= - \frac{1}{3}   W_{ikjl}(z_0) \frac{x^k x^l}{|x|^4}
+ \frac{1}{36} \frac{x^i x^j}{|x|^4} 
+ \Big( 2 A - \frac{R}{36} \Big)  \frac{1}{|x|^2} \delta_{ij}.
\end{split}
\end{align}
This implies the expansion
\begin{align}
\begin{split}
\label{burnsexp2}
(G^2 g)_{ij}(x) &= \delta_{ij} - \frac{1}{3}   W_{ikjl}(z_0) \frac{x^k x^l}{|x|^4}
+ \frac{R}{36} \frac{x^i x^j}{|x|^4} 
+ \Big( 2 A - \frac{R}{36} \Big)  \frac{1}{|x|^2} \delta_{ij}  + O( |x|^{-2 - \epsilon}),
\end{split}
\end{align}
as $|x| \rightarrow \infty$. 

Note that 
\begin{align}
\partial_i \Big(   W_{ikjl}(z_0) \frac{x^k x^l}{|x|^4} \Big) - \partial_j  \Big(W_{ikil}(z_0) \frac{x^k x^l}{|x|^4} \Big) = 0,
\end{align}
due to the symmetries of the Weyl tensor. Also,
\begin{align}
\partial_i \Big( \frac{x^i x^j}{|x|^4} \Big) - \partial_j \Big( \frac{x^i x^i}{|x|^4} \Big)
= 3 \frac{x^j}{|x|^4}, 
\end{align}
and
\begin{align}
\partial_i \Big(  \frac{1}{|x|^2} \delta_{ij} \Big) - \partial_j \Big(  \frac{1}{|x|^2} \delta_{ii}  \Big)
= 6 \frac{x^j}{|x|^4}.
\end{align}
Consequently, using \eqref{massdef}, the mass is
\begin{align}
\begin{split}
{\mathrm{mass}}(G^2 g) &= \lim_{R \rightarrow \infty} \frac{1}{\omega_3} 
\int_{S(R)} \Big( 12 A - \frac{R}{12} \Big) \frac{x^j}{|x|^4}   ( \partial_j \ \lrcorner \ dV)\\
& =  \Big( 12 A - \frac{R}{12} \Big) \lim_{R \rightarrow \infty} \frac{1}{\omega_3} 
\int_{S(R)} \frac{1}{|x|^3} d\sigma \\
& =  12 A - \frac{R}{12}.
\end{split}
\end{align}
\end{proof}

\section{The product metric on $S^2 \times S^2$}

The main example considered is $S^2 \times S^2$ with metric
$g = g_{S^2} \times  g_{S^2}$ the product of metrics of
constant Gaussian curvature $1$. This is invariant under 
the torus action which consists of the product of counter-clockwise
$S^1$-rotations fixing the north and south poles. This action has
$4$ fixed points $(n, n), (n,s), (s, n)$, and
$(s, s)$, where $n$ and $s$ are the north and south
poles, respectively.

  Choose a normal coordinate system $(x_1,x_2)$ on the first
factor based at $n$ and another $(x_3,x_4)$ on the second factor based
at $n$. Then $(x_1, x_2, x_3, x_4)$ is 
a normal coordinate system  on the product based at $(n,n)$. 
Note that $r_1 = \sqrt{x_1^2 + x_2^2}$, is the distance function to $n$ 
on the first factor, $r_2 = \sqrt{x_3^2 + x_4^2}$ 
is the distance function to $n$ 
on the second factor, and $r = \sqrt{r_1^2 + r_2^2}$ is the distance 
function to $(n,n)$ on the product.  
Consider also the coordinate system 
$(r_1, \theta_1, r_2, \theta_2)$ so that
\begin{align}
g_{S^2 \times S^2} = dr_1^2 + \sin^2(r_1) d\theta_1^2 + dr_2^2 + \sin^2(r_2) d\theta_2^2.
\end{align}

Later, it will be convenient to make the change of variables
\begin{align}
\label{cov0}
x = \frac{1}{2}( 1 - \cos(r_1)), \ y = \frac{1}{2}(1 - \cos(r_2)).
\end{align}
Note that $0 \leq r_1 \leq \pi$ and $0 \leq r_1 \leq \pi$,
implies that $0 \leq x \leq 1$ and $0 \leq y \leq 1$. 
In this coordinate system $(n,n) = (0,0)$,
$(s,n) = (1,0)$, $(s,s) = (1,1)$, and $(n,s) = (0,1)$. 

In addition to toric invariance, this metric is also invariant under the 
diagonal symmetry:
\begin{align}
(x,y) \mapsto (y,x).
\end{align}
The symmetries
\begin{align}
(x,y) \mapsto (1-x,y)\\
(x,y) \mapsto (x, 1-y) 
\end{align}
correspond to the antipodal maps on each factor. 

\begin{figure}[t]
\includegraphics{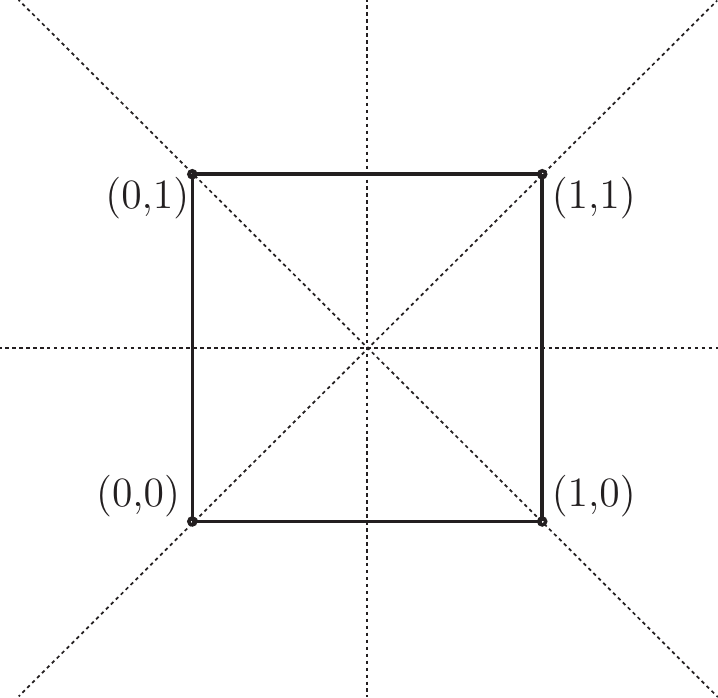}
\caption{Illustration of symmetries of $S^2 \times S^2$. 
The diagonal symmetry is a reflection in the dotted diagonal line passing
through $(0,0)$. 
Reflection in the dotted vertical line is the antipodal map
of the first factor, while reflection in the dotted horizontal line
is the antipodal map of the second factor.
}
\label{s2s2fig}
\end{figure}

\section{Green's function for $S^2 \times S^2$}
Consider the Green's function of the product metric based at $(n,n)$. 
From uniqueness of the Green's function, it must be invariant under 
the torus action. Consequently, $G = G(r_1, r_2)$. It is furthermore 
invariant under the diagonal symmetry, that is, $G(r_1,r_2) = G(r_2,r_1)$. 
\begin{proposition}
\label{ncexp}
In Riemannian normal coordinates based at $(n,n)$, 
for any $\epsilon > 0$, 
the Green's function based at $(n,n)$ admits the expansion
\begin{align}
\label{grexp}
G =   \frac{1}{r_1^2 + r_2^2} +  A - \frac{1}{180} \frac{ r_1^2 r_2^2}{r_1^2 + r_2^2} 
+ \Big( \frac{A}{12}- \frac{1}{360} \Big) (r_1^2 + r_2^2)
+ O( r^{4- \epsilon}) 
\end{align}
for some constant $A$ as $r \rightarrow 0$, where $r^2 = r_1^2 + r_2^2$. 
 \end{proposition}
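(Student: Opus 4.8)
The plan is to combine the general Einstein expansion of Proposition \ref{greenein} with the rotational symmetry of $G$, and then to pin down the homogeneous degree-two term by an explicit computation of the conformal Laplacian in the coordinates $(r_1,\theta_1,r_2,\theta_2)$. First, since the two factors carry unit Gaussian curvature, $(S^2\times S^2, g)$ is Einstein with $\mathrm{Ric}=g$ and scalar curvature $R=4$. Proposition \ref{greenein} therefore applies and gives $G = r^{-2} + A + \sum_i a_i x^i + O(r^{2-\epsilon})$. By uniqueness of the Green's function together with toric invariance, $G$ is a function of $r_1$ and $r_2$ alone, hence a function of $r_1^2 = x_1^2+x_2^2$ and $r_2^2 = x_3^2+x_4^2$; this forces $a_i = 0$ and, more generally, forces the expansion to proceed only in (possibly non-smooth, degree-two homogeneous) combinations of $r_1^2$ and $r_2^2$. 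Thus $G = r^{-2} + A + h$ with $h = O(r^{2-\epsilon})$, and it remains to identify the degree-two part of $h$.

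The key computation is the Laplace--Beltrami operator acting on a function $f = f(r_1,r_2)$, which in these coordinates reads $\Delta f = f_{r_1 r_1} + \cot(r_1) f_{r_1} + f_{r_2 r_2} + \cot(r_2) f_{r_2}$. Writing $\rho = r_1^2 + r_2^2 = r^2$ and using $\cot(u) = u^{-1} - u/3 - u^3/45 - \cdots$, I would compute $\square(r^{-2} + A) = -6\Delta(r^{-2}+A) + R(r^{-2}+A)$. The point is that the $r^{-4}$ terms of $\Delta(r^{-2})$ cancel among themselves, the resulting contribution $-6\cdot\tfrac{2}{3}\,r^{-2} = -4r^{-2}$ to $\square$ is exactly cancelled by $R\,r^{-2}=4r^{-2}$, and the first surviving piece is the bounded, degree-zero residual
\begin{align*}
\square(r^{-2}+A) = 4A - \frac{4}{15}\,\frac{r_1^4 + r_2^4}{\rho^2} + O(r^2).
\end{align*}
Since $\square G = 0$ and $h = O(r^{2-\epsilon})$, the degree-two part of $h$ must cancel this degree-zero residual at leading order, that is, $-6\,\Delta_{\mathrm{flat}} h_2$ must equal $\tfrac{4}{15}(r_1^4+r_2^4)\rho^{-2} - 4A$, where $\Delta_{\mathrm{flat}}$ denotes the Euclidean model of $\Delta$.

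To solve this I would use the ansatz $h_2 = B\,\frac{r_1^2 r_2^2}{\rho} + C\rho$, chosen precisely because $\Delta_{\mathrm{flat}}$ sends it to a combination of exactly the two angular profiles appearing in the residual: a direct computation gives $\Delta_{\mathrm{flat}} h_2 = B\big(4 - 16\,r_1^2 r_2^2 \rho^{-2}\big) + 8C$. Rewriting $r_1^4 + r_2^4 = \rho^2 - 2 r_1^2 r_2^2$ and matching the coefficient of $r_1^2 r_2^2 \rho^{-2}$ and the constant then yields the system $96 B = -\tfrac{8}{15}$ and $-24B - 48 C = \tfrac{4}{15} - 4A$, whose solution is $B = -\tfrac{1}{180}$ and $C = \tfrac{A}{12} - \tfrac{1}{360}$, exactly the coefficients in \eqref{grexp}. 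Finally, exactly as in the proof of Proposition \ref{greenein}, setting $u = r^{-2} + A + h_2$ (suitably cut off) now gives $\square u = O(r^{2-\epsilon}) \in L^p$, and invertibility of $\square$ together with Sobolev/Schauder regularity shows $G - u = O(r^{4-\epsilon})$, which establishes \eqref{grexp}.

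The obstacle I anticipate is analytic rather than algebraic: one must justify that the degree-two coefficient of $h$ is genuinely determined by the leading-order flat equation, i.e.\ that the curvature corrections to $\Delta$ and the zeroth-order term $R\,h_2$ only contribute to the $O(r^2)$ part of $\square h$ and so do not contaminate the degree-two matching. This is what makes the two-step structure safest: first quote $h = O(r^{2-\epsilon})$ from Proposition \ref{greenein}, then improve the remainder to $O(r^{4-\epsilon})$ after subtracting the explicit $h_2$, rather than attempting to characterize all admissible angular modes a priori. The only genuinely delicate computation is the $\cot$-expansion bookkeeping in $\square(r^{-2})$, since an error there would alter the universal constants $-1/180$ and $-1/360$.
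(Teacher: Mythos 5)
Your proposal is correct and is essentially the paper's own argument: the paper likewise writes $\square$ on bi-radial functions as $-6\big(G_{r_1r_1}+\cot(r_1)G_{r_1}+G_{r_2r_2}+\cot(r_2)G_{r_2}\big)+4G$, expands $\cot$ to get $\square(r^{-2})=-\tfrac{12}{45}+\tfrac{24}{45}r_1^2r_2^2r^{-4}+O(r^2)$ (identical to your $-\tfrac{4}{15}(r_1^4+r_2^4)\rho^{-2}$ via $r_1^4+r_2^4=\rho^2-2r_1^2r_2^2$), uses exactly your two model computations $\Delta_{\mathrm{flat}}(r_1^2r_2^2/\rho)=4-16\,r_1^2r_2^2/\rho^2$ and $\Delta_{\mathrm{flat}}\rho=8$ to build the corrected approximate solution, obtains the same coefficients $-\tfrac{1}{180}$ and $\tfrac{A}{12}-\tfrac{1}{360}$, and finishes with the same invertibility-plus-elliptic-regularity bootstrap; the only organizational difference is that you carry $A$ and the $A$-dependent quadratic term inside $u$, whereas the paper's $G_{-2}$ omits them and recovers them from the Taylor expansion of $G_0$. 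One loose end to patch: your matching determines the degree-two part of $h$ only modulo torus-invariant quadratics in the kernel of $\Delta_{\mathrm{flat}}$, namely multiples of $r_1^2-r_2^2$, so the concluding step $G-u=O(r^{4-\epsilon})$ additionally requires the diagonal symmetry $G(r_1,r_2)=G(r_2,r_1)$ (the factor-swap isometry fixes $(n,n)$, so uniqueness of the Green's function gives this invariance, which the paper records just before the proposition) to kill that harmonic mode; with it, the $C^{3,\alpha}$ Taylor polynomial of $G-u$ has vanishing constant, linear, quadratic, and (by torus invariance, since there are no invariant cubics) cubic terms, yielding the claimed remainder.
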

\begin{proof}
It is easy to verify that for $G= G(r_1, r_2)$, the operator $\square$ is 
the operator
\begin{align}
\label{spde}
\square G = -6\big(G_{r_1 r_1} + \cot(r_1) G_{r_1} + G_{r_2 r_2} + \cot(r_2) G_{r_2}\big) + 4 G
\end{align}
on the square $[0,\pi] \times [0, \pi]$.  Since $r = \sqrt{r_1^2 + r_2^2}$ is
the distance function from $(n,n)$, the proof of Proposition \ref{greenein} above
shows that
\begin{align}
\square ( \phi(r) r^{-2}) = O(1),
\end{align}
as $r \rightarrow 0$, where $\phi(r)$ is a cutoff function chosen
as above. The next step is to identify the leading terms of the right hand side. 
Since the result is only concerned with the order of growth at $(n,n)$, 
the cutoff function will be omitted from the following calculations.

Using the expansion 
\begin{align}
\cot(s) = \frac{1}{s} - \frac{s}{3} - \frac{s^3}{45} + O(s^5)
\end{align}
as $s \rightarrow 0$, 
it follows from \eqref{spde}, that 
\begin{align}
\square (r^{-2}) = - \frac{12}{45}  + \frac{24}{45} \frac{r_1^2 r_2^2}{r^4} + O(r^2)
\end{align}
as $r \rightarrow 0$. 

A computation (which is omitted) shows that
\begin{align}
\square \Big( \frac{r_1^2 r_2^2}{r^2} \Big) = -6 \Big( 4 - 16 \frac{r_1^2 r_2^2}{r^4}\Big) + O(r^2),
\end{align}
and
\begin{align}
\square (r^2) = - 48 + O(r^2),
\end{align}
as $r \rightarrow 0$. This implies that 
\begin{align}
\square \Big( \frac{1}{180}  \frac{r_1^2 r_2^2}{r^2} + \frac{1}{360} (r_1^2 + r_2^2)\Big)
= - \frac{12}{45}  + \frac{24}{45} \frac{r_1^2 r_2^2}{r^4} + O(r^2)
\end{align}
as $r \rightarrow 0$. 

Next, defining
\begin{align}
G_{-2} =   \frac{1}{r_1^2 + r_2^2} \Big( 1 - \frac{1}{180}r_1^2 r_2^2 \Big)
- \frac{1}{360} (r_1^2 + r_2^2),
\end{align}
the above computations show that 
\begin{align}
\square G_{-2} = O(r^2)
\end{align}
as $r \rightarrow 0$. 
Consequently, $\square G_0 \in C^{1,\alpha}$ for any $\alpha < 1$. 
Since $\square$ is invertible, one may solve 
\begin{align}
\square G_0 = \square G_{-2}
\end{align}
with $G_0 \in C^{3,\alpha}$. Another straightforward computation shows that 
this function admits an expansion 
\begin{align}
G_0 = -A - \frac{A}{12} (r_1^2 + r_2^2) + O(r^{4 - \epsilon}),
\end{align}
for some constant $A$ as $r \rightarrow 0$. 
Consequently, 
\begin{align}
G = G_{-2} - G_0
\end{align}
admits the claimed expansion. 
\end{proof}

\section{The approximate Green's function}
\label{agf}
In order to have functions defined globally on $S^2 \times S^2$, 
consider the change of variables
\begin{align}
\label{cov}
x = \frac{1}{2}( 1 - \cos(r_1)), \ y = \frac{1}{2}(1 - \cos(r_2)).
\end{align}
Note that $0 \leq r_1 \leq \pi$ and $0 \leq r_1 \leq \pi$,
imply that $0 \leq x \leq 1$ and $0 \leq y \leq 1$. 

For functions $u = u(x,y)$, the operator $\square$ takes the form
\begin{align}
\square u =-6\big\{ x(1-x) u_{xx} + (1-2 x) u_x + y (1-y) u_{yy} + (1-2y) u_y \big\}
+ 4 u,
\end{align}
which is an operator on the square $[0,1] \times [0, 1]$
\begin{proposition}In the $(x,y)$-coordinates, 
for any $\epsilon > 0$, the Green's function based at $(0,0)$ 
admits the expansion
\begin{align}
G = \frac{1}{4} \frac{1}{x+y} + \frac{1}{6} \frac{xy}{(x+y)^2} 
+ A_1 + \frac{1}{9} \frac{x^2 y^2}{(x+y)^3} + \frac{A_1}{3} (x + y) 
+ O ( (x^2 + y^2)^{1 - \epsilon}),
\end{align}
for some constant $A_1$ as $(x,y) \rightarrow (0,0)$. The constants are related by 
\begin{align}
A = A_1 + \frac{1}{12}.
\end{align}
\end{proposition}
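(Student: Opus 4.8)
The plan is to substitute the normal-coordinate expansion of Proposition \ref{ncexp} into the change of variables \eqref{cov} and collect terms by order of vanishing at the origin. First I would invert the relation $x = \frac{1}{2}(1 - \cos r_1)$. Writing it as $\sin^2(r_1/2) = x$ gives $r_1 = 2 \arcsin \sqrt{x}$, and expanding the arcsine series yields
\begin{align}
r_1^2 = 4x + \frac{4}{3} x^2 + \frac{32}{45} x^3 + O(x^4),
\end{align}
with the analogous expansion for $r_2^2$ in terms of $y$. Summing gives $r^2 = r_1^2 + r_2^2 = 4(x+y) + \frac{4}{3}(x^2 + y^2) + \frac{32}{45}(x^3 + y^3) + O((x+y)^4)$, which I will use to re-express each term of Proposition \ref{ncexp}.

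The central step is the expansion of the leading singular term $1/r^2$. Factoring out $4(x+y)$, expanding the geometric series to second relative order, and using the identity $x^2 + y^2 = (x+y)^2 - 2xy$ produces
\begin{align}
\frac{1}{r^2} = \frac{1}{4(x+y)} - \frac{1}{12} + \frac{1}{6} \frac{xy}{(x+y)^2}
+ \frac{(x^2+y^2)^2}{36(x+y)^3} - \frac{2(x^3+y^3)}{45(x+y)^2} + O\big((x+y)^2\big).
\end{align}
Thus this single term already generates the leading singularity $\tfrac{1}{4}(x+y)^{-1}$, the order-zero angular term $\tfrac{1}{6}xy(x+y)^{-2}$, and the constant $-\tfrac{1}{12}$. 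Since the remaining terms of Proposition \ref{ncexp} contribute nothing at order zero, combining the constant $-\tfrac{1}{12}$ with the constant $A$ yields the claimed relation $A_1 = A - \tfrac{1}{12}$.

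It then remains to match the order-$(x+y)$ terms. I would expand the remaining terms of Proposition \ref{ncexp} to leading order, namely $-\tfrac{1}{180}\, r_1^2 r_2^2/r^2 = -\tfrac{1}{45}\,xy/(x+y) + \cdots$ and $(\tfrac{A}{12} - \tfrac{1}{360})r^2 = \tfrac{A}{3}(x+y) - \tfrac{1}{90}(x+y) + \cdots$, then add these to the order-$(x+y)$ part of $1/r^2$ displayed above. Substituting $A = A_1 + \tfrac{1}{12}$ isolates $\tfrac{A_1}{3}(x+y)$, and the purely numerical remainder should collapse to $\tfrac{1}{9}x^2 y^2 (x+y)^{-3}$: writing everything over $(x+y)^3$ and reducing in the symmetric variables $s = x+y$ and $p = xy$, I expect the coefficients of $s^4$ and $s^2 p$ to vanish, leaving precisely $\tfrac{1}{9}p^2$. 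The error term converts routinely, since $x+y$, $(x^2+y^2)^{1/2}$, and $r^2$ are mutually comparable on the first quadrant, so $O(r^{4-\epsilon})$ is absorbed into $O((x^2+y^2)^{1-\epsilon})$ after a harmless rescaling of $\epsilon$. I expect the main obstacle to be this final bookkeeping: extracting the order-$(x+y)$ contribution of $1/r^2$ requires the cubic coefficient $\tfrac{32}{45}$ in the inversion of \eqref{cov}, and the necessary cancellations only become visible after the reduction to $s$ and $p$.
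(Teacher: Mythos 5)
Your proposal is correct and follows essentially the same route as the paper's proof: both invert the change of variables to get $r_1^2 = 4x + \tfrac{4}{3}x^2 + \tfrac{32}{45}x^3 + O(x^4)$, expand $1/r^2$ by a geometric series, and collect terms by order, with your reduction to the symmetric variables $s = x+y$, $p = xy$ simply making explicit the two algebraic identities the paper states without proof (the $s^2p$ coefficient does vanish, and the $s^4$ coefficient cancels against the $\tfrac{1}{36}(x+y)$ released by substituting $A = A_1 + \tfrac{1}{12}$). Your handling of the error term via the comparability of $x+y$ and $(x^2+y^2)^{1/2}$ on the first quadrant is also the correct and intended justification.
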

\begin{proof}
From \eqref{cov},
\begin{align}
r_1^2 = (\cos^{-1}(1 - 2x))^2 = 4x + \frac{4}{3} x^2 + \frac{32}{45} x^3 + O(x^4),
\end{align}
as $x \rightarrow 0$, and similarly
\begin{align}
r_2^2 = (\cos^{-1}(1 - 2y))^2 = 4y + \frac{4}{3} y^2 + \frac{32}{45} y^3 + O(y^4),
\end{align}
as $y \rightarrow 0$. Then expand 
\begin{align}
r_1^2 + r_2^2 = 4(x+y) + \frac{4}{3} (x^2 + y^2) +  \frac{32}{45}( x^3 + y^3) 
+ O(x^4 + y^4)
\end{align}
as $x,y \rightarrow 0$. 
This implies that 
\begin{align}
\frac{1}{ r_1^2 + r_2^2} = \frac{1}{4(x+y)} \Big\{
1 - \frac{x^2 + y^2}{3(x+y)} - \frac{8}{45} \frac{x^3 + y^3}{x+y} 
+ \Big(\frac{x^2 + y^2}{3(x+y)}\Big)^2 + O(x^4 + y^4)\Big\}
\end{align}
as $x,y \rightarrow 0$. 

From \eqref{grexp}, expand 
\begin{align}
\begin{split}\label{ghor}
G &=   \frac{1}{4(x+y)} \Big\{
1 - \frac{x^2 + y^2}{3(x+y)} - \frac{8}{45} \frac{x^3 + y^3}{x+y} 
+ \Big(\frac{x^2 + y^2}{3(x+y)}\Big)^2 \Big\}
\Big\{1 - \frac{1}{180} 16 x y \Big\}\\
&+ A + \Big( \frac{A}{12}- \frac{1}{360} \Big) 4(x + y)
+ O(( x^2 + y^2)^{1 - \epsilon}) 
\end{split}
\end{align}
as  $x,y \rightarrow 0$. The second term on the right hand side of the first line of 
\eqref{ghor} is 
\begin{align}
- \frac{ x^2 + y^2}{12(x+y)^2}  = - \frac{1}{12} + \frac{1}{6}  \frac{xy}{(x+y)^2}.
\end{align}
The remaining leading terms on the right hand side of the first line of 
\eqref{ghor} are
\begin{align}
- \frac{2}{45} \frac{x^3 + y^3}{(x+y)^2} + \frac{ (x^2 + y^2)^2}{36 (x+y)^3}
- \frac{1}{45} \frac{xy}{x+y}
= - \frac{1}{60} (x+y) + \frac{1}{9} \frac{x^2 y^2}{(x+y)^3}. 
\end{align}
Adding up all terms, 
\begin{align}
\begin{split}
G &= \frac{1}{4} \frac{1}{x+y} + \frac{1}{6} \frac{xy}{(x+y)^2} 
+ A - \frac{1}{12} + \frac{1}{9} \frac{x^2 y^2}{(x+y)^3} + \frac{1}{3} 
\Big( A - \frac{1}{12}\Big) (x + y) \\ 
&\ \ \ \ \ \ \ \ + O ( (x^2 + y^2)^{1 - \epsilon}),
\end{split}
\end{align}
as $x,y \rightarrow 0$, which finishes the computation. 
\end{proof}

Next, define
\begin{align}
\label{approxG}
\boxed{
G_{-2} \equiv \frac{1}{4} \frac{1}{x+y} + \frac{1}{6} \frac{xy}{(x+y)^2} 
+ \frac{1}{9} \frac{x^2 y^2}{(x+y)^3}}.
\end{align}
\begin{remark}{\em
This differs from the choice of $G_{-2}$ occuring in the proof of 
Propostion~\ref{ncexp} above, since the choice $A_1 = 0$ corresponds to 
choosing $A = 1/12$, but this does not matter. 
}
\end{remark}
A key formula is given by the following: 
\begin{proposition}The function $G_{-2}$ is smooth on 
$S^2 \times S^2 \setminus \{(n,n)\}$ and
\begin{align}
\square G_{-2} =   \frac{ 8 x^2 y^2 ( 5 x^2 - 8 xy + 5 y^2)}{9 (x+y)^5}.
\end{align}
\end{proposition}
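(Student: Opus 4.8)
The plan is to handle the two assertions separately: the smoothness of $G_{-2}$ away from $(n,n)$, and then the exact identity for $\square G_{-2}$, which I would organize by a weight grading that makes visible the cancellations built into the definition \eqref{approxG}.

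For smoothness, I would note that $x=\tfrac12(1-\cos r_1)$ and $y=\tfrac12(1-\cos r_2)$ are smooth functions on all of $S^2\times S^2$, since $\cos r_i$ is the standard height function on the $i$-th sphere. Then $G_{-2}$ is the composition of the smooth map $(x,y)\colon S^2\times S^2\to[0,1]\times[0,1]$ with the rational function $\tfrac14(x+y)^{-1}+\tfrac16 xy(x+y)^{-2}+\tfrac19 x^2y^2(x+y)^{-3}$, which is smooth wherever $x+y>0$. Since $x,y\ge 0$ on the square, the zero set $\{x+y=0\}$ is the single point $x=y=0$, namely $(n,n)$; hence $G_{-2}$ is smooth on $S^2\times S^2\setminus\{(n,n)\}$.

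For the identity, the first step is to put the operator in divergence form: a one-line check gives $x(1-x)u_{xx}+(1-2x)u_x=\partial_x(x(1-x)\partial_x u)$, so that
\[
\square u=-6\big[\partial_x\big(x(1-x)\partial_x u\big)+\partial_y\big(y(1-y)\partial_y u\big)\big]+4u.
\]
Assign $x$ and $y$ weight $1$, and split $x(1-x)=x-x^2$ to decompose $\square=\square_0+\square_1$, where $\square_0 u=-6[\partial_x(x\partial_x u)+\partial_y(y\partial_y u)]$ lowers weight by one and $\square_1 u=6[\partial_x(x^2\partial_x u)+\partial_y(y^2\partial_y u)]+4u$ preserves weight. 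Writing $G_{-2}=T_{-1}+T_0+T_1$ with $T_{-1}=\tfrac14(x+y)^{-1}$, $T_0=\tfrac16 xy(x+y)^{-2}$, and $T_1=\tfrac19 x^2y^2(x+y)^{-3}$ of weights $-1,0,1$, all three operators commute with the exchange $x\leftrightarrow y$, so each term need only be computed in the $x$-variable and then symmetrized.

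The grading then collapses the sum. A direct computation gives $\square_0 T_{-1}=0$ (the analogue of the harmonicity of $r^{-2}$ in $\RR^4$), so nothing of weight $-2$ appears, and $\square G_{-2}=(\square_1 T_{-1}+\square_0 T_0)+(\square_1 T_0+\square_0 T_1)+\square_1 T_1$, the three groups having weights $-1$, $0$, $1$. One checks that $\square_1 T_{-1}=3(x^2+y^2)(x+y)^{-3}-2(x+y)^{-1}$ and $\square_0 T_0=-(x^2-4xy+y^2)(x+y)^{-3}$ cancel, because $3(x^2+y^2)-(x^2-4xy+y^2)=2(x+y)^2$; a similar computation gives $\square_1 T_0=\tfrac43 xy(2x^2-5xy+2y^2)(x+y)^{-4}=-\square_0 T_1$, so the weight-$0$ group also vanishes. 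What survives is the single weight-$1$ term $\square_1 T_1$, and evaluating $6[\partial_x(x^2\partial_x)+\partial_y(y^2\partial_y)]T_1+4T_1$ reduces, after collecting over $(x+y)^5$, to $\tfrac49 x^2y^2\big[9(x-y)^2+(x+y)^2\big](x+y)^{-5}$, which is the asserted $\tfrac{8x^2y^2(5x^2-8xy+5y^2)}{9(x+y)^5}$.

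The calculation is entirely mechanical, so the only real difficulty is the bookkeeping of the rational expressions; the weight decomposition is what keeps this manageable, replacing one large collection over $(x+y)^5$ by three small cancellations. As consistency checks I would verify that the final numerator is symmetric in $x,y$ and homogeneous of degree $6$, so that $\square G_{-2}$ is homogeneous of weight $1$; under the scaling $x,y\sim r^2$ this is exactly the $O(r^2)$ vanishing already forced by the term-by-term construction of $G_{-2}$ in Proposition \ref{ncexp}.
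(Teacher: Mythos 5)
Your proof is correct and complete. Note that the paper itself gives no argument here --- its proof of this proposition reads, in full, ``This is a straightforward calculation, which is omitted'' --- so you have supplied the omitted calculation, and your way of organizing it is sound. I verified each of your intermediate identities:
\begin{align*}
\square_0 T_{-1} &= 0, \qquad
\square_1 T_{-1} = \frac{3(x^2+y^2)}{(x+y)^3} - \frac{2}{x+y}, \qquad
\square_0 T_0 = -\frac{x^2-4xy+y^2}{(x+y)^3},\\
\square_1 T_0 &= \frac{4xy(2x^2-5xy+2y^2)}{3(x+y)^4} = -\square_0 T_1, \qquad
\square_1 T_1 = \frac{8x^2y^2(5x^2-8xy+5y^2)}{9(x+y)^5},
\end{align*}
and your two algebraic checks, $3(x^2+y^2)-(x^2-4xy+y^2)=2(x+y)^2$ and $9(x-y)^2+(x+y)^2=2(5x^2-8xy+5y^2)$, are both right, so the weight $-1$ and weight $0$ groups indeed vanish and only $\square_1 T_1$ survives, giving exactly the stated formula. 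The smoothness argument is also fine: $\cos r_1$ and $\cos r_2$ are the height functions on the two factors, so $(x,y)$ is a smooth map into $[0,1]^2$, and $x+y$ vanishes only at $(n,n)$. Two remarks. First, your grading adds genuine structure to what the paper leaves as brute computation: the vanishing of the weight $-1$ and weight $0$ groups is precisely the statement that the three terms of \eqref{approxG} were chosen successively so that each cancels the lower-order error of the preceding one, mirroring the construction of $G_{-2}$ in the proof of Proposition \ref{ncexp}; it also yields, with no extra work, that $\square G_{-2}$ is homogeneous of degree $1$ in $(x,y)$, i.e., $O(r^2)$, which is the property actually needed downstream. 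Second, a small point worth making explicit: the coordinate formula for $\square$ acting on $u(x,y)$ is derived on the open dense set $\{0<x<1,\ 0<y<1\}$ where $(r_1,\theta_1,r_2,\theta_2)$ are genuine coordinates; since both sides of your identity are smooth functions of $(x,y)$ on all of $S^2\times S^2\setminus\{(n,n)\}$, the identity extends to the closure by continuity. This is immediate, but strictly speaking it is the step that upgrades a computation on the open square to the claimed identity on the manifold.
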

\begin{proof}
This is a straightforward calculation, which is omitted. 
\end{proof}
Defining 
\begin{align}
f =  \frac{ 8x^2 y^2 ( 5 x^2 - 8 xy + 5 y^2)}{9(x+y)^5},
\end{align}
then as seen in the above proof, one must solve the 
equation
\begin{align}
\label{G0def}
\square G_0 = f, 
\end{align}
for $G_0 \in C^{3,\alpha}$.
Then, one expands  
\begin{align}
G = G_{-2} - G_0 = \frac{1}{r^2} + \frac{1}{12} - G_0(0,0) + O(r^2),
\end{align}
as $r \rightarrow 0$, so the constant $A$ is given by 
\begin{align}
A =  \frac{1}{12} - G_0(0,0),
\end{align}
and by Proposition \ref{massA}, 
\begin{align}
m_1 = \frac{2}{3} - 12 G_0(0,0).
\end{align}
To compute $G_0(0,0)$, an eigenfunction expansion will be used. 
That is, let $\phi_n$ denote a $L^2$ orthonormal basis 
of eigenfunctions, with eigenvalues $\lambda_n$, where $n$ 
corresponds to some indexing of the eigenvalues. 
The function $f$ chosen above is in $L^2$, so admits an expansion
\begin{align}
f = \sum_{n=0}^{\infty} f_n \phi_n, 
\end{align}
Then $G_0$ is given by 
\begin{align}
G_0 =  \sum_{n=0}^{\infty} \frac{f_n}{\lambda_n} \phi_n.
\end{align}
The above sum converges in $L^2$. If it happens to 
converge pointwise at $(0,0)$, then
\begin{align}
\label{massf1}
m_1 = \frac{2}{3} - 12  \sum_{n=0}^{\infty} \frac{f_n}{\lambda_n} \phi_n(0,0).
\end{align}
However, for general $f \in L^2$, there is no reason that the 
above sum should converge. Fortunately,  $f$ as defined 
above has better regularity properties, and it turns out that the sum 
does indeed coverge absolutely, this will be discussed in 
detail below. Moroever, a rigorous estimate for the rate of 
convergence of the partial sums will be given in Section \ref{Error}. 

\section{The mass of $S^2 \times S^2$}
The eigenvalues of  $-\Delta$ on $S^2$ are 
$\lambda_j = j(j+1)$, $j \geq 0$ with associated 
eigenspace of dimension $2j +1$. Only 
functions invariant under rotations fixing the north and 
south poles will be considered, and there is a unique invariant 
eigenfunction under this action, up to scaling. 
Choosing normal coordinates based at the north pole in which 
\begin{align}
g_{S^2} = dr^2 + \sin^2(r) d\theta^2,
\end{align}
the invariant eigenfunction is given by 
\begin{align}
\label{phik}
\phi_j = \sqrt{ \frac{ 2j + 1}{4 \pi}} P_j ( \cos(r)). 
\end{align}
where $P_j$ is the Legendre polynomial of degree $j$. 
The choice
of normalization in \eqref{phik} is to ensure that
$\Vert \phi_j \Vert_{L^2(S^2)} = 1$.  

The invariant eigenfunction is, by definition, 
a solution of the ODE
\begin{align}
\label{efe}
u''(r) + \cot(r) u'(r) + j(j + 1) u(r) = 0.
\end{align}
This has a $2$-dimensional space of solutions, but only
a $1$-dimensional subspace yields smooth solutions
defined on $S^2$. 
Making a change of variables $u(r) = v( \cos(r))$, and 
letting $x = \cos(r)$, \eqref{efe} transforms into
\begin{align}
(1 - x^2) v''(x) - 2 x v'(x) + j(j+1) v(x) = 0,
\end{align}
which is the more familiar Legendre equation, 
which has a unique polynomial solution, $P_j(x)$,
normalized so that $P_j(1) = 1$.

From \cite{Bergeretal}, the eigenvalues on $S^2 \times S^2$
are given by 
\begin{align}
j(j+1) + k(k+1), \ j, k \geq 0,
\end{align}
with eigenfunctions the product of eigenfunctions on each factor. 
Taking into account the torus actions, only the invariant 
eigenfunctions 
\begin{align}
\phi_{j,k} = \phi_j (r_1) \phi_k(r_2),  \ j, k \geq 0. 
\end{align}
need be considered.
By Fubini's Theorem, these satisfy $\Vert \phi_{j,k} \Vert_{L^2(S^2 \times S^2)} = 1$. 

If $\lambda$ is an eigenvalue of $-\Delta$, then 
$6 \lambda + 4$ is an eigenvalue of $\square$,
so the eigenvalues of $\square$ are given by 
\begin{align}
\lambda_{j,k} \equiv 6( j(j+1) + k(k+1)) + 4, \ j, k \geq 0.
\end{align}

From \eqref{massf1}, it follows that
\begin{align}
m_1 = \frac{2}{3}  - \frac{3}{ \pi}\sum_{j,k=0}^{\infty} \frac{\sqrt{2j+1} \sqrt{2k+1} }{6( j(j+1) + k(k+1)) + 4} f_{j,k},
\end{align}
where 
\begin{align}
f_{j,k} = \int_{S^2 \times S^2} f \phi_j \phi_k dV_g.
\end{align}

\subsection{Evaluation of Fourier coefficients}
\label{efc}
Since $f$ is a function of $r_1$ and $r_2$, 
\begin{align}
\begin{split}
f_{j,k} &= \int_{S^2 \times S^2} f  \phi_{j,k} dV_g\\
& = (4 \pi^2) \int_0^{\pi} \int_0^{\pi} 
f(r_1,r_2) \phi_j(\cos(r_1)) \phi_k( \cos(r_2)) \sin(r_1)
\sin(r_2) dr_1 dr_2.
\end{split}
\end{align}
Substituting the change of variables \eqref{cov}, 
noting that
\begin{align}
\sin(r_1) dr_1 = 2 x dx,\ \sin(r_2) dr_2 = 2 y dy,
\end{align} 
this becomes
\begin{align}
\begin{split}
f_{j,k} & = 16 \pi^2 \int_0^1 \int_0^1 f(x,y) \phi_j(1-2x) \phi_k(1-2y) dx dy\\
&= 16 \pi^2 \int_0^1 \int_0^1  \frac{8 x^2 y^2 ( 5 x^2 - 8 xy + 5 y^2)}{9(x+y)^5}
\phi_j(1-2x) \phi_k(1-2y) dx dy.
\end{split}
\end{align}
The key point is that this is now a rational integral 
which can be evaluated explicitly. 

Next, use the following expansion for the Legendre polynomials:
\begin{proposition}
The Legendre polynomials have the explicit expression
\begin{align}
P_j(1 - 2x)  = \sum_{p = 0}^{j} {j \choose p} {j + p \choose p}(-x)^p.
\end{align}
Furthermore, 
\begin{align}
P_j(1) &= 1, \\
P_j(-1) &= (-1)^j. 
\end{align}
\end{proposition}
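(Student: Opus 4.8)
The plan is to obtain the explicit polynomial expansion from Rodrigues' formula and then read off the two endpoint values. Starting from $P_j(t) = \frac{1}{2^j j!}\frac{d^j}{dt^j}(t^2-1)^j$, I would substitute $t = 1-2x$. Since $t^2-1 = (t-1)(t+1) = (-2x)(2-2x) = -4x(1-x)$ and $\frac{d}{dt} = -\frac12\frac{d}{dx}$, a short bookkeeping computation shows that the accumulated constant $\frac{(-4)^j(-1/2)^j}{2^j j!}$ collapses to $\frac{1}{j!}$, leaving the clean shifted Rodrigues identity
\[
P_j(1-2x) = \frac{1}{j!}\,\frac{d^j}{dx^j}\big(x^j(1-x)^j\big).
\]

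Next I would expand $(1-x)^j$ by the binomial theorem and differentiate term by term. Writing $x^j(1-x)^j = \sum_{p}{j \choose p}(-1)^p x^{j+p}$ and using $\frac{d^j}{dx^j}x^{j+p} = \frac{(j+p)!}{p!}x^p$, the coefficient of $(-x)^p$ in $P_j(1-2x)$ becomes $\frac{1}{j!}{j \choose p}\frac{(j+p)!}{p!}$. The only remaining step is the elementary identity $\frac{1}{j!}{j \choose p}\frac{(j+p)!}{p!} = {j \choose p}{j+p \choose p}$, which is immediate from the definition $\frac{(j+p)!}{p!\,j!} = {j+p \choose p}$; this produces exactly the claimed sum.

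Finally, for the endpoint values I would set $x = 0$ in the explicit formula, where only the $p = 0$ term survives, yielding $P_j(1) = {j \choose 0}{j \choose 0} = 1$ and confirming the normalization. For $P_j(-1)$, rather than evaluating the sum at $x = 1$ directly—which would force a less transparent combinatorial identity—I would invoke the parity relation $P_j(-t) = (-1)^j P_j(t)$. This is immediate from Rodrigues' formula, since $(t^2-1)^j$ is an even function of $t$ and each of the $j$ differentiations reverses parity, so $P_j(-1) = (-1)^j P_j(1) = (-1)^j$.

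The argument is entirely routine; the only place demanding genuine care is tracking the signs and powers of $2$ in the change of variables $t = 1-2x$, and recognizing that the coefficient emerging from the differentiation is precisely the product ${j \choose p}{j+p \choose p}$ asserted in the statement.
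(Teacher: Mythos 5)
Your proof is correct, but it takes a genuinely different (and more self-contained) route than the paper, which disposes of this proposition in one line by citing it as classical from Abramowitz--Stegun, Chapter 22, with no argument given at all. Your derivation is sound at every step: the change of variables $t = 1-2x$ in Rodrigues' formula correctly turns $(t^2-1)^j$ into $(-4)^j x^j(1-x)^j$ and $\frac{d}{dt}$ into $-\frac{1}{2}\frac{d}{dx}$, and the constant $(-4)^j(-1/2)^j/(2^j j!) = 2^j/(2^j j!) = 1/j!$ collapses exactly as you claim, recovering the Rodrigues form of the shifted Legendre polynomials; the binomial expansion plus $\frac{d^j}{dx^j}x^{j+p} = \frac{(j+p)!}{p!}x^p$ then yields the coefficient $\frac{1}{j!}{j \choose p}\frac{(j+p)!}{p!} = {j \choose p}{j+p \choose p}$ as stated. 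Your handling of the endpoints is also a good choice: reading off $P_j(1)=1$ from the $p=0$ term at $x=0$ is immediate, and routing $P_j(-1)$ through the parity relation $P_j(-t)=(-1)^j P_j(t)$ (each differentiation of the even function $(t^2-1)^j$ flips parity) avoids having to prove the alternating-sum identity $\sum_{p=0}^j {j \choose p}{j+p \choose p}(-1)^p = (-1)^j$ directly. What the paper's citation buys is brevity, appropriate since the identity is standard; what your argument buys is a complete verification from a single classical input (Rodrigues' formula), which is arguably preferable if one wants the paper's explicit formula for $P_j(1-2x)$ — the workhorse of the Fourier coefficient computation in Section \ref{efc} — to rest on a checked derivation rather than a handbook reference.
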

\begin{proof}
This is classical; see for example \cite[Chapter 22]{Abram}.
\end{proof}
Next, write
\begin{align}
\begin{split}
f_{j,k} & = 4 \pi \sqrt{2j+1}\sqrt{2k+1}  
\sum_{p = 0}^{j}\sum_{q = 0}^{k}  {j \choose p} {j + p \choose p}
 {k \choose q} {k + q \choose q}(-1)^{p+q} \tilde{f}_{p,q},
\end{split}
\end{align}
where
\begin{align}
\tilde{f}_{p.q} = \int_0^1 \int_0^1 f(x,y) x^p x^q dx dy.
\end{align}
\begin{proposition}
\label{hardprop}
Letting $A(p)$ be the partial sum of the alternating 
harmonic series, 
\begin{align}
A(p) = \sum_{i=1}^p (-1)^{i-1} \frac{1}{i}, 
\end{align}
with $A(0) \equiv 0$, then 
\begin{align}
\begin{split}
\label{tfpq}
\tilde{f}_{p.q} &= \frac{1}{18} \frac{1}{p + q +3} \Big\{ 
-109 - 68 (p+q) - 33 (p^2+ q^2)  - 6 (p^3 + q^3)  \\
&+ 4(-1)^{p}  (p+1) (p+2) (3p^2 + 9p +10)\big( \log(2) - A(p)\big)\\
&+4(-1)^{q} (q+1) (q+2) (3q^2 + 9q +10)\big( \log(2) - A(q)\big) \Big\}.
\end{split}
\end{align}
\end{proposition}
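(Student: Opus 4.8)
The plan is to reduce the double integral defining $\tilde{f}_{p,q}$ to elementary one-dimensional integrals and then to a single transcendental building block. First I would clear denominators and expand the quadratic factor in $f$, writing
\[
\tilde{f}_{p,q} = \frac{8}{9}\bigl[\, 5\, I(p+4,q+2) - 8\, I(p+3,q+3) + 5\, I(p+2,q+4)\,\bigr],
\qquad
I(a,b) = \int_0^1\!\int_0^1 \frac{x^a y^b}{(x+y)^5}\, dx\, dy .
\]
The essential observation is that in all three terms the total degree $a+b = p+q+6$ is the same, so a single factor $(p+q+3)^{-1}$ will come out at the end; this already explains the prefactor $\tfrac{1}{18}\tfrac{1}{p+q+3}$ in \eqref{tfpq}.

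Next I would evaluate $I(a,b)$ by splitting the square along the diagonal $\{x=y\}$ and using the scaling substitutions $y = xu$ on $\{y<x\}$ and $x = yv$ on $\{x<y\}$. Since the integrand is homogeneous of degree $a+b-5$ off the diagonal, each substitution separates the radial and angular variables and gives
\[
I(a,b) = \frac{1}{a+b-3}\bigl[\, B(a) + B(b)\,\bigr],
\qquad
B(n) = \int_0^1 \frac{u^n}{(1+u)^5}\, du .
\]
Substituting back and collecting the $p$- and $q$-dependent terms yields $\tilde{f}_{p,q} = \frac{8}{9(p+q+3)}\bigl[\Phi(p)+\Phi(q)\bigr]$, where $\Phi(p) = \int_0^1 u^{p+2}(5u^2 - 8u + 5)(1+u)^{-5}\, du$. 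The symmetry $p \leftrightarrow q$ of the claimed answer is now manifest, and it remains only to compute $\Phi(p)$ in closed form.

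For the last step I would expand $u = (1+u)-1$ to obtain the partial-fraction identity $(5u^2 - 8u + 5)(1+u)^{-5} = 5(1+u)^{-3} - 18(1+u)^{-4} + 18(1+u)^{-5}$, and then repeatedly apply the reduction
\[
\int_0^1 \frac{u^m}{(1+u)^k}\, du = \int_0^1 \frac{u^{m-1}}{(1+u)^{k-1}}\, du - \int_0^1 \frac{u^{m-1}}{(1+u)^k}\, du
\]
to lower every power of $(1+u)$ down to the first. The only transcendental contribution enters through the base case $\int_0^1 \frac{u^n}{1+u}\, du = (-1)^n(\log 2 - A(n))$, which one verifies directly by writing $u^n = (u^n - (-1)^n) + (-1)^n$ and integrating the resulting geometric polynomial; this is precisely where the partial alternating harmonic sum $A(n)$ appears.

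The main obstacle is purely the bookkeeping of this final step. The recursion produces base terms $\log 2 - A(p)$, $\log 2 - A(p-1)$, and $\log 2 - A(p-2)$ with various polynomial coefficients, and one must use $A(p-1) = A(p) - \tfrac{(-1)^{p-1}}{p}$ and the analogous identity for $A(p-2)$ to absorb the shifted sums into the rational part, leaving a single term proportional to $\log 2 - A(p)$; this also forces the coefficient of $\log 2$ to equal that of $-A(p)$, explaining why they group together. A useful internal check is that the coefficient of $\log 2 - A(p)$ in $\Phi(p)$ must equal the coefficient of $\log 2$ alone, namely $(-1)^p\bigl[\,5\binom{p+4}{4} + 8\binom{p+3}{4} + 5\binom{p+2}{4}\,\bigr]$, which simplifies to $\tfrac14(-1)^p(p+1)(p+2)(3p^2+9p+10)$ and matches \eqref{tfpq} after restoring the factor $\tfrac{8}{9}\cdot\tfrac{1}{p+q+3}$. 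The same structure constrains the remaining rational polynomial, so one need only track the accumulated rational boundary terms and confirm they collapse into the cubic $-109 - 68(p+q) - 33(p^2+q^2) - 6(p^3+q^3)$, pinning down the overall normalization by evaluating one or two small values of $p$.
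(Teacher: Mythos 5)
Your proposal is correct, and I verified its key identities: the scaling substitution does give $I(a,b)=\frac{1}{a+b-3}\bigl[B(a)+B(b)\bigr]$, the partial-fraction decomposition $5(1+u)^{-3}-18(1+u)^{-4}+18(1+u)^{-5}$ is right, the base case $\int_0^1 u^n(1+u)^{-1}\,du=(-1)^n(\log 2-A(n))$ is right, and the coefficient $5\binom{p+4}{4}+8\binom{p+3}{4}+5\binom{p+2}{4}=\tfrac14(p+1)(p+2)(3p^2+9p+10)$ matches \eqref{tfpq} after restoring $\tfrac{8}{9(p+q+3)}$; a spot check at $p=q=0$ gives $\Phi(0)=5\log 2-\tfrac{109}{32}$ and hence $\tilde f_{0,0}=\tfrac{1}{54}(160\log 2-109)$, in exact agreement. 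However, your route is genuinely different from the paper's. The paper symmetrizes over the diagonal to reduce to triangle integrals $I(p,q;n)=\int_0^1\int_0^y x^p y^q(x+y)^{-n}\,dx\,dy$, derives a two-variable integration-by-parts recursion lowering $n$, iterates it four times from $n=5$ down to the base case $n=1$ (evaluated by polynomial long division), and must treat $p=0,1$ separately because the iterated formula \eqref{pq5} only holds for $p\geq 4$ after the index shifts. You instead work on the full square and exploit homogeneity: since the integrand is homogeneous of degree $a+b-5$ with $a+b=p+q+6$ fixed across all three terms, the substitutions $y=xu$ and $x=yv$ separate the radial integral $\int_0^1 x^{a+b-4}\,dx=\frac{1}{p+q+3}$ from one-dimensional integrals $B(n)$, after which only the elementary algebraic reduction $u^m(1+u)^{-k}=u^{m-1}(1+u)^{-(k-1)}-u^{m-1}(1+u)^{-k}$ is needed. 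This buys you three things: the prefactor $\frac{1}{p+q+3}$ and the manifest symmetry $\tilde f_{p,q}=\frac{8}{9(p+q+3)}[\Phi(p)+\Phi(q)]$ appear immediately rather than emerging from the recursion; there is no case analysis for small $p$, since your reduction terminates gracefully at $m=0$ for any $k$ (whereas the paper needs the separate formulas \eqref{3q5}, \eqref{2q5} and checks at $p=0,1$); and your observation that every transcendental contribution enters only through the combination $\log 2-A(n)$, so the shift identities move only rational pieces, correctly explains why the $\log 2$ and $-A(p)$ coefficients must agree. What the paper's more general recursion in $I(p,q;n)$ buys in exchange is reusability: it applies verbatim to non-homogeneous rational integrands such as the later computation of $\int_0^1\int_0^1(\square f)^2\,dx\,dy$, where your homogeneity trick would not directly apply. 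Your final rational cubic $-109-68(p+q)-33(p^2+q^2)-6(p^3+q^3)$ is left as bookkeeping, but this is comparable in rigor to the paper's own ``after some algebraic simplification,'' and your proposed checks at small $p$ suffice to pin it down.
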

\begin{proof}
The proof can be found in Appendix \ref{app}. 
\end{proof}
The explicit formula for the mass is given by:
\begin{theorem}
\label{masss2s2}
The mass of the Green's function metric 
of the product metric on 
$S^2 \times S^2$, denoted by $m_1$, 
is given by 
\begin{align}
\label{ms2form}
m_1=  
\frac{2}{3} - 12 \sum_{j,k=0}^{\infty} \frac{(2j+1)(2k+1) }{\lambda_{j,k}} 
 \Big( \sum_{p = 0}^{j}\sum_{q = 0}^{k} c_{j,k}^{p,q} \tilde{f}_{p,q} \Big),
\end{align}
where
\begin{align}
\lambda_{j,k} &= 6\big( j(j+1) + k(k+1)\big) + 4,\\
c_{j,k}^{p,q} & = (-1)^{p+q}{j \choose p} {j + p \choose p}
{k \choose q} {k + q \choose q},
\end{align}
and $\tilde{f}_{p.q}$ is defined in \eqref{tfpq}.
\end{theorem}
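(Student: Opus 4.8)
\emph{Proof proposal.} The plan is to assemble the ingredients already assembled in the preceding sections, so that the theorem becomes a bookkeeping statement plus one analytic point. The starting point is Proposition~\ref{massA}, which expresses the mass purely through the constant term $A$ in the Green's function expansion. Since $S^2 \times S^2$ with unit Gauss curvature on each factor has scalar curvature $R = 4$ (consistent with the $+4G$ term in \eqref{spde}), that proposition gives $\mathrm{mass}(G^2 g) = 12A - \tfrac{1}{3}$. Combining this with the relation $A = \tfrac{1}{12} - G_0(0,0)$ derived in Section~\ref{agf} reduces everything to evaluating the correction term at the basepoint:
\[
m_1 = \frac{2}{3} - 12\, G_0(0,0),
\]
where $G_0$ solves $\square G_0 = f$ with $f$ the rational function boxed after \eqref{approxG}.

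First I would expand $G_0$ in the $L^2$-orthonormal basis of torus-invariant eigenfunctions $\phi_{j,k} = \phi_j(r_1)\phi_k(r_2)$. Because $\square$ acts diagonally with eigenvalue $\lambda_{j,k} = 6(j(j+1)+k(k+1))+4$, one has $G_0 = \sum_{j,k} \lambda_{j,k}^{-1} f_{j,k}\,\phi_{j,k}$ with $f_{j,k} = \int_{S^2\times S^2} f\,\phi_{j,k}\, dV_g$. Evaluating at the pole uses $\phi_j(0) = \sqrt{(2j+1)/4\pi}\,P_j(1) = \sqrt{(2j+1)/4\pi}$, so that $\phi_{j,k}(0,0) = \tfrac{1}{4\pi}\sqrt{(2j+1)(2k+1)}$, which yields the intermediate identity
\[
m_1 = \frac{2}{3} - \frac{3}{\pi}\sum_{j,k=0}^{\infty} \frac{\sqrt{2j+1}\,\sqrt{2k+1}}{\lambda_{j,k}}\, f_{j,k}.
\]
Next I would compute the Fourier coefficients explicitly. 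After the substitution \eqref{cov}, under which $\sin(r_1)\,dr_1 = 2x\,dx$ and $\sin(r_2)\,dr_2 = 2y\,dy$, each $f_{j,k}$ becomes a rational integral over the unit square. Inserting the closed-form expansion $P_j(1-2x) = \sum_p \binom{j}{p}\binom{j+p}{p}(-x)^p$ reduces $f_{j,k}$ to a finite combination of the building-block integrals $\tilde{f}_{p,q}$ evaluated in Proposition~\ref{hardprop}, giving $f_{j,k} = 4\pi\sqrt{(2j+1)(2k+1)}\sum_{p,q} c_{j,k}^{p,q}\,\tilde{f}_{p,q}$. Substituting back and using $\tfrac{3}{\pi}\cdot 4\pi = 12$ cancels the factors of $\pi$ and produces exactly \eqref{ms2form}.

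The main obstacle is analytic rather than algebraic: the identity $G_0(0,0) = \sum_{j,k} \lambda_{j,k}^{-1} f_{j,k}\,\phi_{j,k}(0,0)$ is only legitimate if the eigenfunction series converges pointwise at the singular basepoint $(0,0)$, and for a generic $L^2$ function it need not, as the text preceding \eqref{massf1} warns. The argument must therefore exploit the extra regularity of $f$: the chosen $f$ is continuous and bounded, vanishing (homogeneously of degree one) at $(0,0)$, which is far better than membership in $L^2$. What is actually required is a decay estimate showing that $\lambda_{j,k}^{-1}\sqrt{(2j+1)(2k+1)}\,f_{j,k}$ is absolutely summable, so that the double series converges absolutely to $G_0(0,0)$. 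I expect this bound on the $\tilde{f}_{p,q}$ sums to be the genuine work, and it is precisely what the quantitative error analysis of Section~\ref{Error} is designed to supply; with absolute convergence in hand, the remainder of the proof is the assembly described above.
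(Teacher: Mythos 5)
Your proposal is correct and takes essentially the same route as the paper, whose proof of Theorem \ref{masss2s2} consists precisely of assembling Proposition \ref{massA} (with $R=4$), the identity $m_1 = \tfrac{2}{3} - 12\,G_0(0,0)$ from Section \ref{agf}, the eigenfunction expansion \eqref{massf1} with $\phi_{j,k}(0,0) = \tfrac{1}{4\pi}\sqrt{(2j+1)(2k+1)}$, and the Fourier-coefficient computation of Section \ref{efc}. Your emphasis on the pointwise/absolute convergence of the eigenfunction series at the pole also matches the paper, which flags exactly this issue before \eqref{massf1} and defers it to the quantitative analysis of Section \ref{Error}, where the needed summability is obtained from $\Vert \square^2 G_0 \Vert_{L^2} = \Vert \square f \Vert_{L^2} < \infty$ rather than from continuity of $f$ alone.
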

\begin{proof}
This follows directly from the above calculations. 
\end{proof}
\subsection{Convergence properties}
\label{Error}
For any $h \in L^2(S^2 \times S^2)$, let
\begin{align}
S_N h = \sum_{j = 0}^N \sum_{k=0}^N h_{j,k} \phi_{j,k}
\end{align}
be the $N$th square partial sum, where 
\begin{align}
h_{j,k} = \int_{S^2 \times S^2} h \phi_{j,k} dV.
\end{align}
If $h(x,y) = h(y,x)$, then $h_{j,k} = h_{k,j}$. 
With these assumptions, estimate
\begin{align}
\begin{split}
|h - S_N h| &= \Big| \Big( \sum_{j = N+1}^{\infty} \sum_{k = N+1}^{\infty}
+ \sum_{j = 0}^N \sum_{k = N+1}^{\infty} +   \sum_{j = N+1}^{\infty}\sum_{k = 0}^N \Big)
 h_{j,k} \phi_{j,k} \Big|\\
& = \Big| \Big( \sum_{j = N+1}^{\infty} \sum_{k = N+1}^{\infty}
+ 2 \sum_{j = 0}^N \sum_{k = N+1}^{\infty} \Big) h_{j,k} \phi_{j,k} \Big|.
\end{split}
\end{align} 
Next add the assumption that $h \in L^2_4$, then 
\begin{align}
(\square^2 h)_{j,k} = \lambda_{j,k}^2 h_{j,k},
\end{align}
and the above estimate becomes
\begin{align}
\begin{split}
|h - S_N h| 
& = \Big( \sum_{j = N+1}^{\infty} \sum_{k = N+1}^{\infty}
+ 2 \sum_{j = 0}^N \sum_{k = N+1}^{\infty} \Big) \Big|\frac{1}{\lambda_{j,k}^2} (\square^2 h)_{j,k} \phi_{j,k} \Big| \\
& \leq \Big\{ \Big(\sum_{j = N+1}^{\infty} \sum_{k = N+1}^{\infty}
+ 2 \sum_{j = 0}^N \sum_{k = N+1}^{\infty}  \Big) \frac{\phi^2_{j,k}}{\lambda_{j,k}^4} \Big\}^{1/2}
\Vert \square^2 h \Vert_{L^2}.
\end{split}
\end{align} 
Note  that
\begin{align}
\frac{\phi^2_{j,k}}{\lambda_{j,k}^4}
\leq \frac{1}{16 \pi^2} \frac{ (2j+1) (2k+1)}{ (6 j(j+1) + 6k(k+1) +4)^4}.
\end{align}
The right hand side is a convex function of $j$ and~$k$,
strictly decreasing in both $j$ and $k$. Consequently, 
by the integral test, the following estimates hold 
\begin{align}
\begin{split}
\label{int1}
\sum_{j = N+1}^{\infty} \sum_{k = N+1}^{\infty}  \frac{\phi^2_{j,k}}{\lambda_{j,k}^4}
&\leq  \frac{1}{16 \pi^2} \Bigg( 
\int_{j = N}^{\infty} \int_{k = N}^{\infty} 
\frac{  (2j+1) (2k+1)}{ (6 j(j+1) + 6k(k+1) +4)^4} dj dk \Bigg),\\
\end{split}
\end{align}
and
\begin{align}
\begin{split}
\label{int2}
\sum_{j = 0}^N \sum_{k = N+1}^{\infty}  \frac{\phi^2_{j,k}}{\lambda_{j,k}^4}
&\leq  \frac{1}{16 \pi^2} \Bigg(  \int_{j = 0}^{N} \int_{k = N}^{\infty}    
\frac{ (2j+1) (2k+1)}{ (6 j(j+1) + 6k(k+1) +4)^4} dj dk \\
&\ \ \ \ \ + \int_{N}^{\infty} \frac{ 2k+1}{ (6k(k+1) +4)^4} dk \Bigg).
\end{split}
\end{align}
The integral in \eqref{int1} is given by 
\begin{align}
\begin{split}
\int_{j = N}^{\infty} \int_{k = N}^{\infty} 
\frac{  (2j+1) (2k+1)}{ (6 j(j+1) + 6k(k+1) +4)^4} dj dk
= \frac{1}{3456} \frac{1}{  (3N^2 + 3N +1 )^2}.
\end{split}
\end{align}
The first integral in \eqref{int2} is given by 
\begin{align}
\begin{split}
\int_{j = 0}^{N} \int_{k = N}^{\infty}    
& \frac{ (2j+1) (2k+1)}{ (6 j(j+1) + 6k(k+1) +4)^4} dj dk \\
& =  \frac{1}{864} \Big( 
\frac{1}{ (3N^2 + 3N + 2)^2} \Big)
- \frac{1}{3456} \Big( \frac{1}{(3N^2 + 3N +1)^2} \Big).
\end{split}
\end{align}
The second integral in \eqref{int2} is given by 
\begin{align}
\begin{split}
  \int_{N}^{\infty} \frac{ 2k+1}{ (6k(k+1) +4)^4} dk
= \frac{1}{144} \frac{1}{(3 N^2 + 3N +2)^3}.
\end{split}
\end{align}
It follows that
\begin{align}
\begin{split}
|h - S_N h| \leq  \frac{1}{48 \pi} F(N) \Vert \square^2 h \Vert_{L^2}.
\end{split}
\end{align}
where 
\begin{align}
\label{Fdef}
F(N) = \Bigg\{  - \frac{1}{24}
\frac{1}{  (3N^2 + 3N +1 )^2}
+ \frac{1}{3} \frac{1}{ (3N^2 + 3N +2)^2} + \frac{1}{(3N^2 + 3N +2)^3} \Bigg\}^{1/2}.
\end{align} 
Now choose $h = G_0$, defined in \eqref{G0def}, then
\begin{align}
\Vert \square^2 G_0 \Vert_{L^2} = \Vert \square f \Vert_{L^2} =
4 \pi \Bigg\{ \int_0^1 \int_0^1 (\square f)^2 dx dy \Bigg\}^{1/2}.
\end{align}
The latter is rational integral which can be explicity 
computed:
\begin{align}
\int_0^1 \int_0^1 (\square f)^2 dx dy = \frac{61547}{45045},
\end{align}
(the computation is a straghtforward extension of the 
arguments in Appendix \ref{app}, and is omitted). 
\begin{theorem}
For $G_0$ defined in \eqref{G0def}, 
\begin{align}
\label{error}
\begin{split}
|G_0 - S_N G_0| \leq  \frac{1}{12} \sqrt{\frac{61547}{45045}} \cdot F(N)
\end{split}
\end{align}
where $F(N)$ is defined in \eqref{Fdef}. 
\end{theorem}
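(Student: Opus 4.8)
The plan is to specialize the general square-partial-sum estimate established immediately before the statement, namely $|h - S_N h| \le \frac{1}{48\pi} F(N)\, \Vert \square^2 h\Vert_{L^2}$, valid for any $h \in L^2_4$ with $h(x,y) = h(y,x)$, to the choice $h = G_0$ from \eqref{G0def}. First I would check that $G_0$ meets the two hypotheses of that estimate. The diagonal symmetry $G_0(x,y) = G_0(y,x)$ is inherited from $f$: the right-hand side of \eqref{G0def} is manifestly invariant under the swap $\sigma\colon (x,y)\mapsto(y,x)$, and since the operator $\square$ in the $(x,y)$-coordinates is itself symmetric under this reflection, applying uniqueness to $\square(G_0\circ\sigma) = f\circ\sigma = f$ forces $G_0\circ\sigma = G_0$. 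For the regularity hypothesis, the essential observation is that $\square^2 G_0 = \square(\square G_0) = \square f$, and the finiteness of the rational integral $\int_0^1\int_0^1 (\square f)^2\,dx\,dy = \frac{61547}{45045}$ shows $\square f \in L^2$; elliptic regularity for the fourth-order operator $\square^2$ then gives $G_0 \in L^2_4$, so the general estimate applies.

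With the hypotheses verified, the only remaining task is to evaluate $\Vert \square^2 G_0 \Vert_{L^2}$. Using $\square^2 G_0 = \square f$ and the fact that $\square f$ is again a function of $(x,y)$ alone, I would integrate out the two torus directions and change variables via \eqref{cov}, under which the volume element contributes a factor of $16\pi^2$, to obtain $\Vert \square^2 G_0\Vert_{L^2} = \Vert \square f\Vert_{L^2} = 4\pi\big(\int_0^1\int_0^1 (\square f)^2\,dx\,dy\big)^{1/2} = 4\pi\sqrt{\tfrac{61547}{45045}}$. Substituting this into the general bound and simplifying the numerical constant, $\frac{1}{48\pi}\cdot 4\pi = \frac{1}{12}$, produces exactly the claimed inequality \eqref{error}, with $F(N)$ as in \eqref{Fdef}.

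The sole genuinely computational step, and thus the main obstacle, is the evaluation $\int_0^1\int_0^1 (\square f)^2\,dx\,dy = \frac{61547}{45045}$. Since $\square f$ is an explicit rational function of $x$ and $y$ obtained by applying the second-order operator $\square$ to $f$, its square is also rational, and the double integral is amenable to exactly the partial-fraction and term-by-term integration methods used in Appendix \ref{app} to evaluate $\tilde{f}_{p,q}$. I expect the difficulty there to be purely one of bookkeeping the many rational terms and handling the integrable singularity at the corner $(0,0)$, rather than any conceptual issue; every other step in the argument is a direct substitution into the already-proven estimate.
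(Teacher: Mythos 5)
Your proposal is correct and follows essentially the same route as the paper: you specialize the general square-partial-sum bound $|h - S_N h| \leq \frac{1}{48\pi} F(N) \Vert \square^2 h \Vert_{L^2}$ to $h = G_0$, use $\square^2 G_0 = \square f$ together with the change of variables \eqref{cov} (volume factor $16\pi^2$) to obtain $\Vert \square^2 G_0 \Vert_{L^2} = 4\pi \sqrt{61547/45045}$, and simplify $\frac{1}{48\pi} \cdot 4\pi = \frac{1}{12}$, which is precisely the paper's argument. Your explicit verification of the symmetry $G_0(x,y) = G_0(y,x)$ via uniqueness and of $G_0 \in L^2_4$ via elliptic regularity spells out hypotheses the paper leaves implicit, but the approach is the same.
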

\subsection{Estimates on the mass}
The following theorem is computable by hand, 
and does not depend upon computer calculations:
\begin{theorem}
\label{numprop0}
The mass $m_1$ satisfies the estimates
\begin{align}
\label{fiest}
.4946 < m_1 < .6803.
\end{align}
\end{theorem}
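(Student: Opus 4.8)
The plan is to bound $m_1$ by comparing it with a finite partial sum of the series \eqref{ms2form} and controlling the tail with the error estimate \eqref{error}. Write $m_1^{(N)}$ for the value obtained by truncating the double sum in \eqref{ms2form} to $0 \le j,k \le N$. Since $m_1 = \frac{2}{3} - 12 G_0(0,0)$ and the summand of \eqref{ms2form} is exactly $\frac{(2j+1)(2k+1)}{\lambda_{j,k}}\sum_{p,q} c_{j,k}^{p,q}\tilde{f}_{p,q} = \frac{f_{j,k}}{\lambda_{j,k}}\phi_{j,k}(0,0)$, we have $m_1^{(N)} = \frac{2}{3} - 12\, S_N G_0(0,0)$, and therefore $|m_1 - m_1^{(N)}| = 12\,|G_0(0,0) - S_N G_0(0,0)|$. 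I will take $N = 1$, so that $m_1^{(1)}$ involves only the four index pairs $(j,k) \in \{0,1\}^2$; this is the claimed ``first partial sum.''

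The first point is that the estimate \eqref{error} applies at the specific point $(0,0)$. Indeed, the derivation of \eqref{error} bounds $\phi_{j,k}^2$ by $\frac{1}{16\pi^2}(2j+1)(2k+1)$, and since $\phi_j(\cos r) = \sqrt{(2j+1)/(4\pi)}\,P_j(\cos r)$ with $|P_j(\cos r)| \le P_j(1) = 1$, this bound is exactly the value $\phi_{j,k}^2(0,0)$ at the north pole; hence the estimate is valid at $(0,0)$. Consequently,
\begin{align}
|m_1 - m_1^{(1)}| \le \sqrt{\tfrac{61547}{45045}}\, F(1),
\end{align}
and a direct evaluation of \eqref{Fdef} at $N=1$ gives $F(1) = \big(-\tfrac{1}{1176} + \tfrac{1}{192} + \tfrac{1}{512}\big)^{1/2}$, yielding the half-width $\sqrt{61547/45045}\,F(1) \approx 0.0929$.

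It remains to compute $m_1^{(1)}$ by hand. For this I would evaluate $\tilde{f}_{p,q}$ for $p,q \in \{0,1\}$ from \eqref{tfpq}; each is a rational combination of $1$ and $\log 2$, for instance $\tilde{f}_{0,0} = \frac{1}{54}(160\log 2 - 109)$, together with $\tilde{f}_{1,0} = \tilde{f}_{0,1}$ and $\tilde{f}_{1,1}$, using the symmetry $\tilde{f}_{p,q} = \tilde{f}_{q,p}$ manifest in \eqref{tfpq}. I would then assemble the four terms of \eqref{ms2form} using $\lambda_{0,0}=4$, $\lambda_{1,0}=\lambda_{0,1}=16$, $\lambda_{1,1}=28$ and the binomial coefficients $c_{j,k}^{p,q}$; this produces $m_1^{(1)} \approx 0.5876$. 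Combining with the tail bound gives $0.4946 < m_1 < 0.6803$.

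The main obstacle is arithmetic rather than conceptual: since the asserted bounds are sharp to four digits, one must carry $\log 2$ to sufficient precision and track rounding in both $m_1^{(1)}$ and the half-width $\sqrt{61547/45045}\,F(1)$ to be certain the rounded endpoints $0.4946$ and $0.6803$ genuinely enclose the interval. The only point requiring care beyond routine computation is the justification that the pointwise error at $(0,0)$ is controlled by the same $F(N)$ appearing in the uniform estimate, which follows from $|P_j(\cos r)| \le 1$ as noted above.
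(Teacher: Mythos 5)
Your proposal is correct and follows the paper's proof essentially verbatim: the paper likewise evaluates the $N=1$ square partial sum of \eqref{ms2form}, obtaining $S_1 = \frac{4777}{1260} - \frac{208\log 2}{45} \approx 0.5874$ (your $0.5876$ is a slight rounding slip, and the pointwise validity of \eqref{error} at $(0,0)$ via $|P_j| \le P_j(1) = 1$ that you spell out is exactly the mechanism behind the paper's uniform bound on $\phi_{j,k}^2$), and combines it with $|m_1 - S_1| \le \sqrt{61547/45045}\, F(1) \approx 0.09286$ to conclude. Nothing essential is missing; only the final arithmetic needs the care you yourself flagged.
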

\begin{proof} 
It is straightforward to verify that the square
partial sum in \eqref{ms2form} corresponding to $N=1$ is
\begin{align}
S_1 \equiv \frac{2}{3} - \frac{12}{56} \Big( 
53\tilde{f}_{0,0} 
-57\tilde{f}_{1,0} 
- 57\tilde{f}_{0,1} 
+ 72 \tilde{f}_{1,1} \Big).
\end{align} 
Using \eqref{tfpq}, this is equal to 
\begin{align}
S_1 = \frac{4777}{1260} - \frac{208 \log(2)}{45} \sim .5873...
\end{align}
The error estimate \eqref{error} yields 
\begin{align}
|m_1 - S_1 | \leq  \sqrt{\frac{61547}{45045}} \cdot F(1) \sim .09286...,
\end{align}
which obviously implies the stated estimates. 
\end{proof}

This implies the following corollary, which covers the
first case in Theorem \ref{t3}:
\begin{corollary} The value of $- 9(m_1)^{-1}$ is different from 
$-1/3$. 
\end{corollary}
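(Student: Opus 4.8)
The plan is to reduce the stated inequality to a single scalar separation that is already established rigorously. First I would unwind the notation: since $(m_1)^{-1} = 1/m_1$, we have $-9(m_1)^{-1} = -9/m_1$, and this equals $-1/3$ precisely when $9/m_1 = 1/3$, that is, when $m_1 = 27$. Thus the corollary is equivalent to the elementary assertion $m_1 \neq 27$, and the entire task is to separate $m_1$ from the value $27$.

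Next I would invoke Theorem \ref{numprop0}, which supplies the rigorous two-sided bound
\begin{align}
.4946 < m_1 < .6803.
\end{align}
The upper bound alone suffices: since $m_1 < .6803 < 27$, we conclude $m_1 \neq 27$, hence $-9(m_1)^{-1} = -9/m_1 \neq -1/3$. For concreteness, inverting the interval shows $-9/m_1 \in (-9/.4946, -9/.6803)$, which is contained in roughly $(-18.2, -13.3)$ and is therefore disjoint from the singleton $\{-1/3\}$. The gap is enormous, so once the bound on $m_1$ is in hand no delicate estimate is required.

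There is essentially no remaining obstacle, since all the analytic content resides in Theorem \ref{numprop0}, whose proof computes the first square partial sum $S_1 = \frac{4777}{1260} - \frac{208 \log(2)}{45}$ explicitly and controls the tail via the error estimate \eqref{error} at $N=1$. The one point worth emphasizing is that the argument uses the \emph{rigorous} interval from Theorem \ref{numprop0}, not the approximate value $m_1 \sim .5872$ from Table \ref{massvalues}; consequently the corollary needs no numerical integration and no computer calculation, consistent with the claim in the introduction. I would close by remarking that the argument is robust to the sign and placement of the factor $9$: the intended inequality $-(9m_1)^{-1} \neq -1/3$ reduces instead to $m_1 \neq 1/3$, and $1/3$ likewise fails to lie in $(.4946, .6803)$, so the same bound settles both readings.
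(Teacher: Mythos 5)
Your proof is correct and takes essentially the same route as the paper: both reduce the corollary to the rigorous interval $.4946 < m_1 < .6803$ from Theorem \ref{numprop0} and conclude by elementary arithmetic (the paper inverts the interval to get $-.2247 < -(9m_1)^{-1} < -.1632$, which excludes $-1/3$). The only caveat is that the paper's intended reading is $-(9m_1)^{-1}$, so the operative case is your closing remark reducing the claim to $m_1 \neq 1/3$, which the lower bound $m_1 > .4946$ settles; your primary reading $-9/m_1 \neq -1/3$ addresses a typographical ambiguity rather than the statement actually used in Theorem \ref{t3}, but since you verify both readings the argument is complete.
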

\begin{proof}
The estimate \eqref{fiest} implies that 
\begin{align}
-.2247 < - (9m_1)^{-1} < -.1632,
\end{align}
so cannot be equal to $-1/3 \sim -.3333...$.
\end{proof}
To obtain a better estimate on the mass, one must use a computer to 
calculate more terms in the sum. To this end:
\begin{proposition}
\label{numprop}
The mass $m_1$ satisfies the estimates
\begin{align}
.58722 < m_1 < .58727.
\end{align}
\end{proposition}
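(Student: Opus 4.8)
The plan is to sharpen the two-sided bound \eqref{fiest} from Theorem \ref{numprop0} purely by pushing the partial sum further, using the same rigorous tail estimate \eqref{error}. Recall from \eqref{massf1} that $m_1 = \frac{2}{3} - 12\, G_0(0,0)$, and that the $N$th square partial sum of the mass series \eqref{ms2form}, call it $S_N$, is exactly $\frac{2}{3} - 12\,(S_N G_0)(0,0)$. So the first step is simply to observe that the quantity I must control is the value of $G_0 - S_N G_0$ at the single point $(0,0)=(n,n)$, rather than some integrated norm.

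The second step is to note that the bound \eqref{error} is in fact a sup-norm estimate: in its derivation the eigenfunctions are controlled by $\phi_{j,k}^2 \le \frac{1}{16\pi^2}(2j+1)(2k+1)$, and since $P_j(1)=1$ this bound is attained precisely at $(n,n)$. Hence \eqref{error} holds pointwise at $(0,0)$, and multiplying through by $12$ gives
\begin{align}
|m_1 - S_N| \le \sqrt{\frac{61547}{45045}}\, F(N),
\end{align}
with $F(N)$ as in \eqref{Fdef}, exactly consistent with the $N=1$ instance already recorded in the proof of Theorem \ref{numprop0}. A short asymptotic analysis of \eqref{Fdef} shows $F(N)^2 = \frac{7}{216}\,N^{-4}\big(1 + O(N^{-1})\big)$, so the right-hand side decays like $0.21\,N^{-2}$. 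To land inside the window $(.58722,.58727)$, which has half-width $2.5\times 10^{-5}$, it therefore suffices to take $N$ of order a few hundred; I would fix a convenient such value and verify the exact value of $F(N)$ directly from \eqref{Fdef} rather than relying on the asymptotic.

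The final step is the evaluation of $S_N$ itself. Using \eqref{ms2form} together with the closed form \eqref{tfpq} for $\tilde f_{p,q}$, each $S_N$ is a finite sum that evaluates to an expression of the form $a + b\log 2$ with $a,b \in \mathbb{Q}$, exactly as in the hand computation $S_1 = \frac{4777}{1260} - \frac{208\log 2}{45}$. Carrying this out in exact rational arithmetic and then inserting a sufficiently precise value of $\log 2$ produces a numerical value for $S_N$ with negligible rounding error; combining it with the tail bound above then pins $m_1$ to the stated interval. The only genuine obstacle here is computational rather than conceptual: the series has on the order of $N^2$ outer terms, each itself a nested double sum over $p,q$, so the verification must be done by machine. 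It is nonetheless an entirely finite and exact calculation requiring no numerical integration, and its rigor is underwritten by the error estimate \eqref{error}.
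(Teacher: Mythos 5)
Your proposal is correct and follows essentially the same route as the paper: the paper's proof simply evaluates the square partial sum $S_{100}\sim .5872473$ of \eqref{ms2form} by machine in exact form and applies the tail bound \eqref{error}, which gives $|m_1 - S_{100}| < .0000209$ and hence the stated interval. Your additional observations --- that \eqref{error} is a genuinely pointwise bound valid at $(0,0)$ since $\phi_{j,k}^2 \le \frac{1}{16\pi^2}(2j+1)(2k+1)$ is attained there, that $F(N)^2 = \frac{7}{216}N^{-4}(1+O(N^{-1}))$ so $N$ near $100$ suffices, and that each $S_N$ has the exact form $a + b\log 2$ with $a,b$ rational --- are accurate details the paper leaves implicit rather than a different argument.
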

\begin{proof} 
Using Mathematica, the square partial sum in \eqref{ms2form} 
corresponding to $N=100$ is equal to 
\begin{align}
S_{100} \sim .5872473203....
\end{align}
The error estimate \eqref{error} yields 
\begin{align}
|m_1 - S_{100} | <.0000209,
\end{align}
which implies the stated estimates.
\end{proof}

This also yields a better estimate on $t_0 = -(9m_1)^{-1}$ appearing 
in Theorem \ref{t2}:
\begin{corollary}
The value of $- (9m_1)^{-1}$ satisfies
\begin{align}
-.1892 < - (9m_1)^{-1} < -.18922,
\end{align}
and the value of $- 2 (9m_1)^{-1}$ satisfies
\begin{align}
-.3784 < -2 (9m_1)^{-1} < - .37842 . 
\end{align}
\end{corollary}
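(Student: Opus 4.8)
The plan is to read off both displayed estimates as a direct arithmetic consequence of the two-sided bound $.58722 < m_1 < .58727$ supplied by Proposition \ref{numprop}, combined with the elementary monotonicity of the reciprocal. There is no further analytic content here: once $m_1$ is trapped in a short interval, so is any continuous monotone function of it, and the corollary is simply the translation of the bound on $m_1$ through the maps $x \mapsto -(9x)^{-1}$ and $x \mapsto -2(9x)^{-1}$.

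First I would note that for $x > 0$ the map $x \mapsto -(9x)^{-1}$ is strictly increasing, since its derivative $(9x^2)^{-1}$ is positive. Applying this increasing map to the chain $.58722 < m_1 < .58727$ therefore preserves the inequalities and gives
\begin{align*}
-(9 \cdot .58722)^{-1} < -(9m_1)^{-1} < -(9 \cdot .58727)^{-1}.
\end{align*}
I would then compute $9 \cdot .58722 = 5.28498$ and $9 \cdot .58727 = 5.28543$, take reciprocals, and negate, obtaining an enclosure of $-(9m_1)^{-1}$ by two explicit decimals. Rounding the lower (more negative) endpoint further down and the upper (less negative) endpoint further up yields the asserted bounds on $-(9m_1)^{-1}$. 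The estimate for $-2(9m_1)^{-1}$ then follows by multiplying the whole chain by the positive constant $2$, which again preserves the direction of the inequalities, followed by the same outward rounding.

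The only obstacle is the rounding bookkeeping. Because the interval for $m_1$ has width a mere $5 \times 10^{-5}$, the induced intervals for $-(9m_1)^{-1}$ and $-2(9m_1)^{-1}$ are correspondingly narrow, so each endpoint must be rounded in precisely the direction that \emph{weakens} the enclosure, and I would carry several extra decimal digits in the reciprocal computation to be certain the rounded endpoints land on the correct side of the true values. With that care taken, the two displayed inequalities are immediate, and no new ingredient beyond Proposition \ref{numprop} is required.
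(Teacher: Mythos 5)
Your strategy---push the two-sided bound of Proposition \ref{numprop} through the strictly increasing map $x \mapsto -(9x)^{-1}$, then double---is exactly the intended one: the paper supplies no separate proof for this corollary, treating it as immediate interval arithmetic from $.58722 < m_1 < .58727$, in the same spirit as the hand-checked corollary following Theorem \ref{numprop0}. Note in passing something you did not flag: as printed, the corollary's chains are vacuous, since $-.1892 > -.18922$ and $-.3784 > -.37842$; the only sensible reading is with the endpoints swapped, i.e.\ $-.18922 < -(9m_1)^{-1} < -.1892$ and $-.37842 < -2(9m_1)^{-1} < -.3784$.

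Under that reading, however, your final step---that outward rounding of the transformed endpoints ``yields the asserted bounds''---fails at the last digit, and carrying extra decimal places cannot repair it. From Proposition \ref{numprop} the certified enclosure is
\begin{align*}
-\frac{1}{9 \cdot .58722} \approx -0.1892155 \;<\; -(9m_1)^{-1} \;<\; -\frac{1}{9 \cdot .58727} \approx -0.1891994,
\end{align*}
and the upper endpoint is strictly \emph{greater} than $-.1892$. Certifying $-(9m_1)^{-1} < -.1892$ would require $m_1 < (9 \cdot 0.1892)^{-1} \approx 0.5872680$, which is sharper than the proposition's $m_1 < .58727$; even the unrounded data behind it, $m_1 < S_{100} + 0.0000209 \approx 0.5872682$, just misses this threshold. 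Doubling gives $-0.3784310 < -2(9m_1)^{-1} < -0.3783988$, so neither the claimed lower endpoint $-.37842$ nor the claimed upper endpoint $-.3784$ is certified either (the former would need $m_1 > 0.587237$, stronger than $m_1 > .58722$). What your argument honestly delivers, after rounding each endpoint in the safe direction, is $-.18922 < -(9m_1)^{-1} < -.18919$ and $-.37844 < -2(9m_1)^{-1} < -.37839$: consistent with, but strictly weaker than, the printed claims. To certify the stated digits one needs a tighter bound on $m_1$, e.g.\ a larger partial sum $N$ so that $F(N)$ in \eqref{error} shrinks; as written, the assertion that the rounded endpoints ``land on the correct side'' is precisely the step that breaks, and the corollary itself is overstated in its final digit relative to what Proposition \ref{numprop} can justify.
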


\section{The mass of $G(2,4)$}
\label{G24sec}
For a Riemannian manifold $(M,g)$, denote by $G_{M,p}$ the Green's function 
of $(M,g)$ with pole at $p$.
As mentioned in the introduction, the product metric on $S^2 \times S^2$ 
admits the Einstein quotient $G(2,4)$,
where $\ZZ_2$ acts as the product of the antipodal map on both factors.
Since the conformal Laplacian is linear, and the Green's
function is unique, it follows that
\begin{align}
\label{pig}
\pi^* (G_{G(2,4), \pi(0,0)}) = G_{S^2 \times S^2, (0,0)} +  G_{S^2 \times S^2, (1,1)},
\end{align}
where $\pi: S^2 \times S^2 \rightarrow G(2,4)$ is the projection map. 

This function in \eqref{pig} has the expansion
\begin{align}
\pi^* (G_{G(2,4), \pi(0,0)}) =  \frac{1}{r^2} + A_{G(2,4)} + O(r^2),
\end{align}
and the constant term is given by 
\begin{align}
 A_{G(2,4)} = A_{S^2 \times S^2} +   G_{S^2 \times S^2, (1,1)} (0,0).
\end{align}
But from uniqueness of the Green's function, and since
the quotient symmetry is $(x,y) \mapsto (1-x,1-y)$, 
\begin{align}
 G_{S^2 \times S^2, (1,1)} (x,y) =  G_{S^2 \times S^2, (0,0)} (1-x,1-y).
\end{align}
It follows that
\begin{align}
 A_{G(2,4)} = A_{S^2 \times S^2} +   G_{S^2 \times S^2, (0,0)} (1,1).
\end{align}
The first term on the right hand side was computed above. 
To compute the second term, it was shown above in Section \ref{agf} 
that 
\begin{align}
G_{S^2 \times S^2, (0,0)} = G_{-2} - G_0, 
\end{align}
where $\square G_0 = \square G_{-2}$. 
Next, evaluate  
\begin{align}
\begin{split}
G_{(0,0)} (1,1) &= G_{-2} (1,1) - G_0 (1,1)\\
& = \frac{1}{4} \cdot \frac{1}{2} + \frac{1}{6} \cdot \frac{1}{4} 
+ \frac{1}{9} \cdot\frac{1}{8} 
- \sum_{j,k}  \frac{1}{ \lambda_{j,k}} f_{j,k} \phi_j(1) \phi_k(1)\\
& = \frac{13}{72} 
- \frac{1}{4\pi} \sum_{j,k} \frac{\sqrt{(2j+1)(2k+1)}}{ \lambda_{j,k}} (-1)^{j+k} f_{j,k}.
\end{split}
\end{align}
Consequently, 
\begin{theorem}
\label{massg24}
The mass of $G(2,4)$, denoted by $m_2$, is given by 
\begin{align}
m_2 =  
\frac{17}{6} - 12 \sum_{j,k=0}^{\infty} \frac{(2j+1)(2k+1) }{\lambda_{j,k}} 
(1 + (-1)^{j+k}) \Big( \sum_{p = 0}^{j}\sum_{q = 0}^{k} c_{j,k}^{p,q} \tilde{f}_{p,q} \Big).
\end{align}
\end{theorem}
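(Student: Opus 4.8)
The plan is to reduce the computation of $m_2$ to quantities already determined for $S^2\times S^2$. Since the product metric descends to $G(2,4) = (S^2\times S^2)/\ZZ_2$ and the $\ZZ_2$-action (the product of the two antipodal maps) is free, the projection $\pi$ is a local isometry; in particular $G(2,4)$ is Einstein with the same scalar curvature $R = 4$ as $S^2\times S^2$. Thus Proposition \ref{massA} gives $m_2 = 12 A_{G(2,4)} - R/12$ and $m_1 = 12 A_{S^2\times S^2} - R/12$ with the identical curvature term, so that
\begin{align}
m_2 = m_1 + 12\big( A_{G(2,4)} - A_{S^2\times S^2}\big).
\end{align}
Combining this with the additive relation $A_{G(2,4)} = A_{S^2\times S^2} + G_{(0,0)}(1,1)$ (obtained from the pullback identity \eqref{pig} and the symmetry $(x,y)\mapsto(1-x,1-y)$), the whole problem collapses to
\begin{align}
m_2 = m_1 + 12\, G_{(0,0)}(1,1).
\end{align}

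Next I would substitute the two series already in hand. Theorem \ref{masss2s2} supplies $m_1$ as $2/3$ minus $12$ times the double sum with coefficient $1$, while the computation preceding the theorem gives $G_{(0,0)}(1,1) = 13/72$ minus the same double sum weighted instead by $(-1)^{j+k}$ (this sign coming from $P_j(-1) = (-1)^j$ evaluated at the antipodal corner). Multiplying the latter by $12$ contributes the constant $12\cdot 13/72 = 13/6$ and a sum carrying the factor $(-1)^{j+k}$. Adding the two constants gives $2/3 + 13/6 = 17/6$, and the two double sums, having identical summands $\frac{(2j+1)(2k+1)}{\lambda_{j,k}}\sum_{p,q} c_{j,k}^{p,q}\tilde f_{p,q}$, merge into a single sum with the combined weight $1 + (-1)^{j+k}$. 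This is exactly the asserted formula.

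The only genuine points to check are two justifications. First, to invoke Proposition \ref{massA} for $G(2,4)$ one must know that the expansion of its Green's function at $\pi(0,0)$ carries no linear term; this follows because $(1,1) = (s,s)$ is a fixed point of the torus action, so the invariant function $G_{(0,0)}$ has vanishing gradient there and the smooth summand $G_{(1,1)}$ contributes no linear part at $(0,0)$. Second, rearranging $m_1$ and $12\,G_{(0,0)}(1,1)$ termwise and recombining them under one summation sign requires absolute convergence of the double series, which is precisely what the error analysis of Section \ref{Error} establishes. Neither step is a real obstacle: both the analytic input (the Green's function expansions, the value of $\tilde f_{p,q}$, the convergence estimate) and the geometric input (the pullback formula) have already been assembled, so the proof is essentially a matter of bookkeeping and collecting constants.
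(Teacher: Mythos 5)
Your proposal is correct and follows essentially the same route as the paper: the pullback identity \eqref{pig} together with the symmetry $(x,y)\mapsto(1-x,1-y)$ gives $A_{G(2,4)} = A_{S^2\times S^2} + G_{(0,0)}(1,1)$, the value $G_{(0,0)}(1,1) = \frac{13}{72} - \frac{1}{4\pi}\sum_{j,k}\frac{\sqrt{(2j+1)(2k+1)}}{\lambda_{j,k}}(-1)^{j+k}f_{j,k}$ is obtained exactly as in Section \ref{G24sec} via $P_j(-1)=(-1)^j$, and the constants and sums combine as $\frac{2}{3}+\frac{13}{6}=\frac{17}{6}$ with weight $1+(-1)^{j+k}$, matching the paper's proof. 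Your two added justifications (vanishing of the linear term by torus invariance, and absolute convergence permitting the termwise recombination) are correct points that the paper leaves implicit.
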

\begin{proof}
Combining the above formulas,
\begin{align}
\begin{split}
m_2 &= 12 A_{G(2,4)} - \frac{R}{12} \\
& = 12 \big( A_{S^2 \times S^2} +   G_{S^2 \times S^2, (0,0)} (1,1) \big) - \frac{1}{3}\\
& = \frac{2}{3} - \frac{3}{\pi}\sum_{j,k} 
\frac{\sqrt{2j+1} \sqrt{2k+1} }{ \lambda_{j,k}} f_{j,k}\\
& + 12 \Big(
\frac{13}{72} 
- \frac{1}{4\pi} \sum_{j,k} \frac{\sqrt{(2j+1)(2k+1)}}{ \lambda_{j,k}} (-1)^{j+k} f_{j,k} \Big),
\end{split}
\end{align}
which yields the formula.
\end{proof}
\subsection{Numerical evaluation}
The error estimate is carried out exactly as in Section \ref{Error}, 
the details are omitted. Only the following result is stated here:
\begin{proposition}
The mass of $G(2,4)$ is approximately
\begin{align}
m_2 \sim 2.6289.
\end{align}
\end{proposition}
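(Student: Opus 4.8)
The plan is to evaluate the explicit series for $m_2$ from Theorem \ref{massg24} to high precision by computing a large square partial sum, while controlling the tail by reusing the convergence machinery of Section \ref{Error}. The starting point is the identity recorded in the proof of Theorem \ref{massg24}, namely $m_2 = m_1 + 12\, G_{S^2 \times S^2,(0,0)}(1,1)$, where
\begin{align*}
G_{S^2 \times S^2,(0,0)}(1,1) = G_{-2}(1,1) - G_0(1,1) = \frac{13}{72} - G_0(1,1),
\end{align*}
and $G_0$ is the very same solution of $\square G_0 = f$ that appears in the $S^2 \times S^2$ computation. Thus the only new analytic quantity is $G_0(1,1)$, read off from the expansion $G_0 = \sum_{j,k}(f_{j,k}/\lambda_{j,k})\phi_{j,k}$ evaluated at the antipodal fixed point $(1,1)$, which produces the sign $(-1)^{j+k}$ via $P_j(-1) = (-1)^j$.

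First I would verify that the error estimate of Section \ref{Error} transfers verbatim. The bound $|G_0 - S_N G_0| \le \tfrac{1}{12}\sqrt{61547/45045}\,F(N)$ proved there is in fact a uniform, pointwise-in-space estimate: its derivation rests solely on the inequality $\phi_{j,k}^2 \le (2j+1)(2k+1)/(16\pi^2)$, which holds at every point precisely because $|P_j(\cos r)| \le 1$. Hence it applies at $(1,1)$ exactly as at $(0,0)$. Writing $S_N$ for the $N$th square partial sum of the series in Theorem \ref{massg24}, the two evaluation points contribute equally and with the same magnitude, so the factor $(1+(-1)^{j+k})$ (whose absolute value never exceeds $2$) costs at most a factor of two in the tail:
\begin{align*}
|m_2 - S_N| \le 2\sqrt{\frac{61547}{45045}}\, F(N),
\end{align*}
with $F(N)$ as in \eqref{Fdef}.

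Next I would compute $S_N$ for $N = 100$. Every summand is explicit: the coefficients $c_{j,k}^{p,q}$ are products of binomials, the rational integrals $\tilde f_{p,q}$ are given in closed form by Proposition \ref{hardprop} (involving only $\log 2$ and the partial alternating harmonic sums $A(p)$), and $\lambda_{j,k} = 6(j(j+1)+k(k+1))+4$. Summing these in Mathematica yields $S_{100}$, and since $F(100)$ is of order $10^{-5}$ the displayed tail bound pins $m_2$ down comfortably within the claimed precision $m_2 \sim 2.6289$; as with $m_1$, a crude hand estimate from $S_1$ already localizes the value, with the computer only sharpening the later digits.

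The only genuine obstacle is bookkeeping rather than mathematics. One must incorporate the factor-of-two loss from $(1+(-1)^{j+k})$ honestly into the error bound, and one must sum the closed-form $\tilde f_{p,q}$ without being misled by the large alternating binomial coefficients $c_{j,k}^{p,q}$, whose partial cancellations make naive numerical summation delicate. Neither is a real difficulty, which is why the substantive analytic work already resides in Theorem \ref{massg24} and in the uniform convergence estimate of Section \ref{Error}, and the remaining argument "is carried out exactly as in Section \ref{Error}."
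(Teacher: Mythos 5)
Your proposal is correct and is essentially the paper's own argument: the paper's proof of this proposition simply says the details are analogous to Proposition \ref{numprop}, i.e., one evaluates a large square partial sum of the explicit series in Theorem \ref{massg24} and controls the tail by the estimate of Section \ref{Error}, which is exactly what you do. Your two added observations --- that the error bound transfers to the point $(1,1)$ because $|P_j(\cos r)| \le 1$ makes the bound on $\phi_{j,k}^2$ pointwise uniform, and that the factor $1 + (-1)^{j+k}$ costs at most a factor of $2$, yielding $|m_2 - S_N| \le 2 \sqrt{61547/45045}\, F(N)$ --- are precisely the details the paper omits, and they are correct.
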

\begin{proof}
The details are analogous to that of Proposition \ref{numprop}, and are omitted. 
\end{proof}
This section is concluded by noting that 
\begin{align}
(-9m_2)^{-1} &\sim -.0422,\\
2 (-9m_2)^{-1} &\sim -.0845.
\end{align}
The first is not equal to $-1/3$, which proves this case of Theorem \ref{t3}. 

\section{The mass of $\RP^2 \times \RP^2$}
\label{rp2rp2sec}
As mentioned in the introduction, the product metric on $S^2 \times S^2$ 
admits the Einstein quotient $\RP^2 \times \RP^2$,
where $\ZZ_2 \oplus \ZZ_2$ acts by the antipodal map on each factor.
Again, since the conformal Laplacian is linear, and the Green's
function is unique, it follows that
\begin{align}
\label{pig2}
\pi^* (G_{\RP^2 \times \RP^2, \pi (0,0)}) = G_{S^2 \times S^2, (0,0)} + G_{S^2 \times S^2, (1,0)} 
+  G_{S^2 \times S^2, (0,1)}  + G_{S^2 \times S^2, (1,1)},
\end{align}
where $\pi: S^2 \times S^2 \rightarrow \RP^2 \times \RP^2$ is the projection map. 

This function in \eqref{pig2} has the expansion
\begin{align}
\pi^* (G_{\RP^2 \times \RP^2, \pi (0,0)}) =  \frac{1}{r^2} + A_{\RP^2 \times \RP^2} + O(r^2),
\end{align}
and the constant term is given by 
\begin{align}
 A_{\RP^2 \times \RP^2} = A_{S^2 \times S^2} +  G_{S^2 \times S^2, (1,0)} (0,0)+  G_{S^2 \times S^2, (0,1)} (0,0) +  G_{S^2 \times S^2, (1,1)} (0,0).
\end{align}
Again, from uniqueness of the Green's function, and since
the quotient map is generated by 
$(x,y) \mapsto (1-x,y)$ and $(x,y) \mapsto (x,1-y)$, 
\begin{align}
 G_{S^2 \times S^2, (1,0)} (x,y) =  G_{S^2 \times S^2, (0,0)} (1-x, y),
\end{align}
and a similar argument yields that 
\begin{align}
 G_{S^2 \times S^2, (0,1)} (x,y) =  G_{S^2 \times S^2, (0,0)} (x, 1- y).
\end{align}
It follows that
\begin{align}
 A_{\RP^2 \times \RP^2} = A_{S^2 \times S^2} +   G_{S^2 \times S^2, (0,0)} (1,0) +   G_{S^2 \times S^2, (0,0)} (0,1)+  G_{S^2 \times S^2, (0,0)} (1,1).
\end{align}
The first and last term on the right hand side were computed above in 
Section \ref{G24sec}. 
To compute the other terms, recall that
\begin{align}
G_{(0,0)} = G_{-2} - G_0, 
\end{align}
where $\square G_0 = \square G_{-2}$. 
Next, evaluate  
\begin{align}
\begin{split}
G_{(0,0)} (1,0) &= G_{-2} (1,0) - G_0 (1,0)\\
& = \frac{1}{4} - \sum_{j,k}  \frac{1}{ \lambda_{j,k}} f_{j,k} \phi_j(1) \phi_k(1)\\
& = \frac{1}{4} 
- \frac{1}{4\pi} \sum_{j,k} \frac{\sqrt{(2j+1)(2k+1)}}{ \lambda_{j,k}} (-1)^{j} f_{j,k}. 
\end{split}
\end{align}
Consequently,
\begin{theorem}
\label{massrp2rp2}
The mass of $ \RP^2 \times \RP^2$, denoted by $m_3$, is given by 
\begin{align}
m_3 =  
\frac{53}{6} - 12 \sum_{j,k=0}^{\infty} \frac{(2j+1)(2k+1) }{\lambda_{j,k}} 
\big(1 + (-1)^j + (-1)^k+ (-1)^{j+k} \big) 
\Big( \sum_{p = 0}^{j}\sum_{q = 0}^{k} c_{j,k}^{p,q} \tilde{f}_{p,q} \Big).
\end{align}
\end{theorem}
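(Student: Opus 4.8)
The plan is to mirror the derivation of Theorem~\ref{massg24}, assembling the pieces already recorded in this section together with the expansion from Theorem~\ref{masss2s2}. Since the product metric descends to an Einstein metric on $\RP^2 \times \RP^2$ with the same scalar curvature $R = 4$, Proposition~\ref{massA} gives $m_3 = 12 A_{\RP^2 \times \RP^2} - 1/3$. Substituting the expression for $A_{\RP^2 \times \RP^2}$ obtained above from \eqref{pig2}, and grouping, one finds
\begin{align}
m_3 = \Big( 12 A_{S^2 \times S^2} - \frac{1}{3} \Big) + 12 G_{(0,0)}(1,0) + 12 G_{(0,0)}(0,1) + 12 G_{(0,0)}(1,1),
\end{align}
where the first grouped term is exactly $m_1$, whose closed form is recorded in Theorem~\ref{masss2s2}.

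Next I would evaluate the three cross terms. The value of $G_{(0,0)}(1,0)$ was just computed, and $G_{(0,0)}(1,1)$ was computed in Section~\ref{G24sec}; only $G_{(0,0)}(0,1)$ remains. Writing $G_{(0,0)} = G_{-2} - G_0$ and expanding $G_0$ in the eigenbasis $\phi_{j,k}$, I would note that $G_{-2}(0,1) = 1/4$ and that $\phi_k$ evaluated at $y = 1$ contributes $P_k(-1) = (-1)^k$, whence
\begin{align}
G_{(0,0)}(0,1) = \frac{1}{4} - \frac{1}{4\pi} \sum_{j,k} \frac{\sqrt{(2j+1)(2k+1)}}{\lambda_{j,k}} (-1)^{k} f_{j,k}.
\end{align}
This is the calculation of $G_{(0,0)}(1,0)$ with the roles of the two factors interchanged.

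Finally I would collect terms. The constant contributions $2/3$, $3$, $3$, and $13/6$ coming from $m_1$ and the three cross terms sum to $53/6$. The four eigenfunction sums merge into a single sum in which the summand $\tfrac{3}{\pi}\sqrt{(2j+1)(2k+1)}\,\lambda_{j,k}^{-1} f_{j,k}$ is weighted by the factor $1 + (-1)^j + (-1)^k + (-1)^{j+k}$. Converting to the rational coefficients through the identity
\begin{align}
\frac{3}{\pi}\sqrt{(2j+1)(2k+1)}\, f_{j,k} = 12\,(2j+1)(2k+1) \sum_{p=0}^{j}\sum_{q=0}^{k} c_{j,k}^{p,q}\, \tilde{f}_{p,q},
\end{align}
which is immediate from the Legendre expansion of $f_{j,k}$ together with the definition of $c_{j,k}^{p,q}$, produces the stated formula. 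I do not expect a genuine obstacle here, as the argument is a bookkeeping combination of established identities; the one point meriting care is verifying that $G_{(0,0)}(0,1)$ carries the weight $(-1)^k$ rather than $(-1)^j$, which follows from $P_k(-1) = (-1)^k$ together with the diagonal symmetry $f_{j,k} = f_{k,j}$ that renders the $(-1)^j$ and $(-1)^k$ sums numerically equal.
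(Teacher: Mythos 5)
Your proposal is correct and takes essentially the same route as the paper's proof: Proposition \ref{massA} with $R = 4$, the decomposition of $A_{\RP^2 \times \RP^2}$ from \eqref{pig2}, the evaluations $G_{-2}(1,0) = G_{-2}(0,1) = 1/4$ and $G_{-2}(1,1) = 13/72$ with respective weights $(-1)^j$, $(-1)^k$, $(-1)^{j+k}$, and the conversion $f_{j,k} = 4\pi \sqrt{(2j+1)(2k+1)} \sum_{p,q} c_{j,k}^{p,q} \tilde{f}_{p,q}$. The only cosmetic difference is that you package $12 A_{S^2 \times S^2} - 1/3$ as $m_1$ via Theorem \ref{masss2s2}, whereas the paper writes that sum out explicitly.
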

\begin{proof}
Using the above formulas, compute that
\begin{align}
\begin{split}
m_2 &= 12 A_{   \RP^2 \times \RP^2 } - \frac{R}{12} \\
& = 12 \big( A_{S^2 \times S^2} +   2  G_{S^2 \times S^2, (0,0)} (1,0)+  G_{S^2 \times S^2, (0,0)} (1,1) \big) - \frac{1}{3}\\
& = \frac{2}{3} - \frac{3}{\pi}\sum_{j,k} 
\frac{\sqrt{2j+1} \sqrt{2k+1} }{ \lambda_{j,k}} f_{j,k}\\
& +  12 \Big(
\frac{1}{4} 
- \frac{1}{4\pi} \sum_{j,k} \frac{\sqrt{(2j+1)(2k+1)}}{ \lambda_{j,k}}  (-1)^{j} f_{j,k} \Big)\\
& +  12 \Big(
\frac{1}{4} 
- \frac{1}{4\pi} \sum_{j,k} \frac{\sqrt{(2j+1)(2k+1)}}{ \lambda_{j,k}}  (-1)^{k} f_{j,k} \Big)\\
& + 12 \Big(
\frac{13}{72} 
- \frac{1}{4\pi} \sum_{j,k} \frac{\sqrt{(2j+1)(2k+1)}}{ \lambda_{j,k}} (-1)^{j+k} f_{j,k} \Big),
\end{split}
\end{align}
which yields the formula.
\end{proof}
\subsection{Numerical evaluation}
Again, the error estimate is carried out exactly as in Section \ref{Error}, 
the details are omitted. Only the following result is stated here:
\begin{proposition}The mass of $\RP^2 \times \RP^2$ is approximately
\begin{align}
m_2 \sim 8.4323. 
\end{align}
\end{proposition}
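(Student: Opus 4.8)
The plan is to mirror the argument of Proposition~\ref{numprop} for $m_1$, now using the closed-form expression for $m_3$ from Theorem~\ref{massrp2rp2} together with the error analysis of Section~\ref{Error}. First I would rewrite $m_3$, exactly as in the proof of Theorem~\ref{massrp2rp2}, as a fixed rational constant minus $12$ times a linear combination of the pointwise values of the function $G_0$ of \eqref{G0def}. Using $A_{S^2 \times S^2} = \tfrac{1}{12} - G_0(0,0)$ from Section~\ref{agf}, the evaluations $G_{-2}(1,0) = \tfrac14$ and $G_{-2}(1,1) = \tfrac{13}{72}$, and the symmetry $G_{(0,0)}(1,0) = G_{(0,0)}(0,1)$, this reduces to
\begin{align*}
m_3 = \frac{53}{6} - 12\, G_0(0,0) - 24\, G_0(1,0) - 12\, G_0(1,1).
\end{align*}
Thus it suffices to approximate $G_0$ at the three points $(0,0)$, $(1,0)$, and $(1,1)$.

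The key observation is that the error estimate \eqref{error} is in fact uniform in the point of evaluation. Indeed, the only place the basepoint entered in Section~\ref{Error} was through the bound $\phi_{j,k}^2 \leq \tfrac{1}{16\pi^2}(2j+1)(2k+1)$, which follows from $|P_j(\cos r)| \leq 1$ and therefore holds at every point of $S^2 \times S^2$. Consequently,
\begin{align*}
|G_0(p) - S_N G_0(p)| \leq \frac{1}{12}\sqrt{\frac{61547}{45045}}\, F(N)
\end{align*}
holds simultaneously at $p = (0,0), (1,0), (1,1)$, with $F(N)$ as in \eqref{Fdef}. Summing with the multiplicities $12, 24, 12$, the truncation error in $m_3$ is at most $4\sqrt{61547/45045}\, F(N)$, and since $F(N) = O(N^{-4})$ this decays rapidly.

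The remaining step is the numerical one, identical in spirit to Proposition~\ref{numprop}: using Mathematica I would evaluate the square partial sum of the series in Theorem~\ref{massrp2rp2} at a large value such as $N = 100$, inserting the explicit rational values $\tilde f_{p,q}$ from \eqref{tfpq}. Here the sign factor $1 + (-1)^j + (-1)^k + (-1)^{j+k}$ equals $4$ when $j$ and $k$ are both even and vanishes otherwise, so only the even-even terms contribute; no numerical integration is needed, only a finite rational sum. Combining the partial sum with the error bound then yields the stated value $m_3 \sim 8.4323$. The only genuine subtlety, which I would check most carefully, is the uniformity of the error estimate over the three evaluation points; the alternating signs cause no difficulty, since \eqref{error} already controls the absolute value of each tail.
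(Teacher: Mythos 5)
Your proposal is correct and takes essentially the same route as the paper, whose proof of this proposition is simply the remark that the details are analogous to Proposition~\ref{numprop}: your reduction $m_3 = \tfrac{53}{6} - 12\,G_0(0,0) - 24\,G_0(1,0) - 12\,G_0(1,1)$, the uniformity of the bound \eqref{error} over the evaluation points via $|P_j(\cos r)| \leq 1$, and the observation that $1 + (-1)^j + (-1)^k + (-1)^{j+k}$ selects the even--even terms are all valid elaborations of what the paper leaves implicit. One minor slip worth fixing: since the bracket in \eqref{Fdef} is $O(N^{-4})$, its square root gives $F(N) = O(N^{-2})$, not $O(N^{-4})$; this is still more than enough decay at $N = 100$ for the stated four-digit accuracy.
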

\begin{proof}
The details are analogous to that of Proposition \ref{numprop}, 
and are omitted. 
\end{proof}
This section is concluded by noting that 
\begin{align}
(-9m_3)^{-1} &\sim -.0131,\\
2 (-9m_3)^{-1} &\sim -.0263,
\end{align}
The first is not equal to $-1/3$, which proves this case of Theorem \ref{t3}. 
\appendix
\section{Evaluation of integrals}
\label{app}
Recalling that
\begin{align}
f =  \frac{ 8x^2 y^2 ( 5 x^2 - 8 xy + 5 y^2)}{9(x+y)^5},
\end{align}
in this appendix, the integral 
\begin{align}
\tilde{f}_{p.q} = \int_0^1 \int_0^1 f(x,y) x^p y^q dx dy
\end{align}
is computed explicitly. 
The proof will be a sequence of propositions, beginning with:
\begin{proposition}Let $p,q,$ and $n$ be integers satisfying 
$p \geq 0$, $q \geq 0$, and $n \geq 1$. Define 
\begin{align}
I(p,q;n) = \int_0^1 \int_0^y \frac{x^p y^q}{(x+y)^n} dx dy .
\end{align}
Then for $n > 1$ and $p+q > n -2$,
\begin{align}
\label{itform}
I(p,q;n) = \frac{-1}{ 2^{n-1}(n-1)(p+q+2-n)} + \frac{p}{n-1} I(p-1,q;n-1).
\end{align}
\end{proposition}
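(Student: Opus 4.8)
The plan is to prove the recursion by integrating by parts in the inner $x$-integral, choosing the factor $(x+y)^{-n}$ to integrate and $x^p$ to differentiate. First I would write
\begin{align}
I(p,q;n) = \int_0^1 y^q \Big( \int_0^y x^p (x+y)^{-n}\, dx \Big) dy,
\end{align}
and focus on the bracketed integral $J(y) = \int_0^y x^p (x+y)^{-n}\, dx$. Using the antiderivative $\int (x+y)^{-n}\, dx = \frac{(x+y)^{1-n}}{1-n}$, integration by parts gives
\begin{align}
J(y) = \Big[ x^p \frac{(x+y)^{1-n}}{1-n} \Big]_{x=0}^{x=y} + \frac{p}{n-1} \int_0^y x^{p-1} (x+y)^{1-n}\, dx.
\end{align}

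The next step is to evaluate the boundary term. At $x = y$ the bracket contributes $y^p \frac{(2y)^{1-n}}{1-n} = \frac{-1}{2^{n-1}(n-1)} y^{p+1-n}$, while at $x = 0$ the factor $x^p$ vanishes for $p \geq 1$, so the lower endpoint drops out. This vanishing of the $x = 0$ endpoint is exactly what makes the recursion clean, and is the reason the identity is applied in the regime $p \geq 1$. Substituting back,
\begin{align}
J(y) = \frac{-1}{2^{n-1}(n-1)} y^{p+1-n} + \frac{p}{n-1} \int_0^y x^{p-1} (x+y)^{-(n-1)}\, dx,
\end{align}
whose second term is precisely the inner integral appearing in $I(p-1,q;n-1)$.

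Finally I would multiply by $y^q$ and integrate over $y \in [0,1]$. The second term reassembles directly into $\frac{p}{n-1} I(p-1,q;n-1)$, while the first becomes $\frac{-1}{2^{n-1}(n-1)} \int_0^1 y^{p+q+1-n}\, dy$. Here the hypothesis $p+q > n-2$ is precisely what guarantees $p+q+1-n > -1$, so the elementary integral converges to $\frac{1}{p+q+2-n}$, producing the stated constant $\frac{-1}{2^{n-1}(n-1)(p+q+2-n)}$; the assumption $n > 1$ is used throughout to keep $n-1$ a nonzero denominator and the antiderivative valid.

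The computation is routine; the points requiring care are the convergence bookkeeping and the legitimacy of the manipulations. I expect the main thing to verify is that $I(p,q;n)$ is genuinely finite under the stated hypotheses, which a quick homogeneity check settles: the substitution $x = ty$ gives $J(y) = y^{p+1-n}\int_0^1 t^p(1+t)^{-n}\, dt$, so that $I(p,q;n) = \big(\int_0^1 t^p(1+t)^{-n}\, dt\big)/(p+q+2-n)$ is finite exactly when $p+q+2-n > 0$. This also licenses the use of Fubini and the integration by parts on an absolutely convergent iterated integral. The only genuine subtlety is the role of $p \geq 1$ in killing the lower boundary term: for $p = 0$ the $x = 0$ endpoint does contribute, and the formula must there be supplemented by a separate base evaluation, which is where the subsequent propositions in the appendix take over.
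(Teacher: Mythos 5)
Your proof is correct and takes essentially the same route as the paper: integration by parts in the inner $x$-integral against the antiderivative $(x+y)^{1-n}/(1-n)$, with the $x=y$ endpoint producing the constant term and the remaining integral reassembling into $\frac{p}{n-1}I(p-1,q;n-1)$. One point in your write-up deserves emphasis, because it is not merely a caveat about your own argument but a genuine (minor) correction to the statement: the proposition allows $p \geq 0$, yet for $p=0$ the lower endpoint $x=0$ contributes $y^{q+1-n}/(1-n)$ to the boundary term, and the recursion \eqref{itform} as written is then false — e.g.\ $I(0,1;2) = 1/2$ directly, while \eqref{itform} gives $-1/2$; the correct value is $I(0,q;n) = \big(1-2^{1-n}\big)/\big((n-1)(q+2-n)\big)$. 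The paper's proof silently drops this endpoint when passing from the boundary evaluation $x^p y^q (x+y)^{1-n}\big|_0^y$ to the single term $2^{1-n}y^{p+q+1-n}$, which is harmless in context since \eqref{itform} is only ever invoked in Proposition \ref{ipqform} with first argument at least $1$ (the base cases with first argument $0$ are handled separately, via Proposition \ref{i1prop} and direct evaluation), but your explicit flag of the $p\geq 1$ requirement, together with your homogeneity check $I(p,q;n) = \big(\int_0^1 t^p(1+t)^{-n}\,dt\big)/(p+q+2-n)$ justifying finiteness and the legitimacy of the manipulations, makes your version more careful than the paper's.
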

\begin{proof}
Integrating by parts,
\begin{align}
\begin{split}
I(p,q;n) &= \int_0^1 \int_0^y \frac{x^p y^q}{(x+y)^n} dx dy \\
& = \int_0^1 \int_0^y x^p y^q \frac{1}{1-n} \frac{\partial}{\partial x}
\Big( \frac{1}{(x+y)^{n-1}} \Big) dx dy\\
& = \frac{1}{1-n} \int_0^1 \Big\{ 
 x^p y^q \frac{1}{(x+y)^{n-1}}\Big|^y_0 - \int_0^y p x^{p-1} y^q  \frac{1}{(x+y)^{n-1}}
\Big\}\\
& = \frac{1}{1-n} \Big\{ \int_0^1 \frac{1}{2^{n-1}}y^{p+q - n +1} dy - p I(p-1,q;n-1) \Big\}\\
& =  \frac{1}{1-n} \Big\{\frac{1}{2^{n-1}} \frac{1}{p+q+2 -n} - p I(p-1,q;n-1) \Big\},
\end{split}
\end{align}
which yields the formula.
\end{proof}
The next proposition is:
\begin{proposition}
\label{i1prop}
For all $p \geq 0, q \geq 0$, 
\begin{align}
\label{ipq1}
I(p,q;1) 
= \frac{1}{p+q+1} (-1)^p ( \log(2) - A(p)).
\end{align}
where $A(p)$ is the $p$th partial sum of the alternating 
harmonic series, 
\begin{align}
A(p) = \sum_{i=1}^p (-1)^{i-1} \frac{1}{i}, 
\end{align}
with $A(0) \equiv 0$. 
\begin{proof}
First, if $p = 0$, compute
\begin{align}
\begin{split}
I(0,q;1) &= \int_0^1 \int_0^y \frac{y^q}{x+y} dx dy\\
& = \int_0^1 y^q  \Big( \log(x +y) \Big) \Big|^y_0 dy \\
& = \frac{1}{q+1}  \log(2),
\end{split}
\end{align}
which is indeed equal to the right hand side of \eqref{ipq1}. 

Next, if $p \geq 1$, long divide as polynomials in $x$, 
\begin{align}
\frac{x^p} {x + y} = \sum_{i = 0}^{p-1} (-1)^i x^{p-1 -i} y^{i} + (-1)^p \frac{y^p}{x +y}.
\end{align}
Using this, it follows that
\begin{align}
\begin{split}
I(p,q;1)&=  \int_0^1 \int_0^y \frac{x^p y^q}{(x+y)} dx dy \\
&= \int_0^1 y^q \int_0^y 
\Big( \sum_{i = 0}^{p-1} (-1)^i x^{p-1 -i} y^{i} + (-1)^p \frac{y^p}{x +y} \Big) dx dy\\
& = \int_0^1 y^q 
\Big( \sum_{i = 0}^{p-1} (-1)^i \frac{1}{p-i} 
x^{p-i} y^{i} + (-1)^p y^p \log(x +y) \Big)\Big|^y_0 dy\\
& =  \int_0^1 y^{p+q} \Big( \sum_{i = 0}^{p-1} (-1)^i \frac{1}{p-i} +(-1)^p \log(2) \Big) dy\\
& = \frac{1}{p+q+1}  (-1)^p ( \log(2) - A(p)).
\end{split}
\end{align}
\end{proof}
\end{proposition}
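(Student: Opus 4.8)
The plan is to carry out the inner $x$-integration first and treat the factor $y^q$ together with the outer $y$-integration as essentially passive. The key observation is that $\int_0^y (x+y)^{-1}\,dx = \log(2y) - \log y = \log 2$, independent of $y$; once the inner integral $\int_0^y x^p (x+y)^{-1}\,dx$ is shown to be a constant multiple of $y^p$, the remaining integral $\int_0^1 y^{p+q}\,dy = (p+q+1)^{-1}$ supplies the prefactor in \eqref{ipq1}. In the base case $p = 0$ this is immediate: $I(0,q;1) = (q+1)^{-1}\log 2$, which agrees with the right-hand side since $A(0) = 0$ and $(-1)^0 = 1$.

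For $p \geq 1$ the essential algebraic step is polynomial long division of $x^p$ by $x+y$, giving
\begin{align}
\frac{x^p}{x+y} = \sum_{i=0}^{p-1} (-1)^i x^{p-1-i} y^i + (-1)^p \frac{y^p}{x+y}.
\end{align}
This isolates the entire logarithmic singularity in the single remainder term, while the polynomial part integrates elementarily over $[0,y]$. Integrating term by term, the polynomial contributes $y^p \sum_{i=0}^{p-1} (-1)^i/(p-i)$ and the remainder contributes $(-1)^p y^p \log 2$; multiplying by $y^q$ and integrating over $[0,1]$ again produces the factor $(p+q+1)^{-1}$.

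The final and only delicate step is to identify the finite sum $\sum_{i=0}^{p-1}(-1)^i/(p-i)$ with $-(-1)^p A(p)$. I would substitute $j = p-i$ to rewrite it as $(-1)^p\sum_{j=1}^p (-1)^j / j = -(-1)^p \sum_{j=1}^p (-1)^{j-1}/j = -(-1)^p A(p)$; combined with the $(-1)^p \log 2$ term this gives exactly $(-1)^p(\log 2 - A(p))$, hence the claim. The main obstacle is entirely bookkeeping, namely getting the signs and the index reversal correct in the long-division identity so that the alternating harmonic partial sum $A(p)$ emerges with the right sign.
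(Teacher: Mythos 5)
Your proposal is correct and follows essentially the same route as the paper: a direct computation for $p=0$, and for $p \geq 1$ the same long-division identity $\frac{x^p}{x+y} = \sum_{i=0}^{p-1}(-1)^i x^{p-1-i} y^i + (-1)^p \frac{y^p}{x+y}$ followed by term-by-term integration, with the factor $y^{p+q}$ yielding $(p+q+1)^{-1}$. Your explicit index substitution $j = p-i$ showing $\sum_{i=0}^{p-1}(-1)^i/(p-i) = -(-1)^p A(p)$ is exactly the step the paper performs implicitly in its final line, and your sign bookkeeping checks out.
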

The final proposition is: 
\begin{proposition}
\label{ipqform}
For $p \geq 4$, $q \geq 0$, 
\begin{align}
\begin{split}
\label{pq5}
I(p,q;5) &= \frac{-1}{192(p+q-3)} \Big( 4 p^3 - 10 p^2 + 8p +3 \Big)\\
& \ \ + \frac{p(p-1)(p-2)(p-3)}{3 \cdot 2^3  (p+q-3)}  (-1)^p (  \log(2) - A(p -4)).
\end{split}
\end{align}
For $q > 0$,
\begin{align}
\label{3q5}
I(3,q;5) &= \frac{1}{64 q}.
\end{align}
For $q > 1$, 
\begin{align}
\label{2q5}
I(2,q;5) &= \frac{5}{192 (q-1)}.
\end{align}
\end{proposition}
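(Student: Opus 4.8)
The plan is to derive all three formulas by iterating the reduction identity \eqref{itform}, which lowers both the exponent $p$ and the power $n$ by one at the cost of a single explicit rational term and a multiplicative factor $p/(n-1)$ in front of the next integral. The whole computation then reduces to bookkeeping the accumulated product of these factors together with the sum of the rational remainders, terminating at a base case that can be evaluated in closed form — either Proposition \ref{i1prop} (the $n=1$ case) or a direct evaluation of $I(0,q;n)$.

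For the generic case $p \geq 4$, I would apply \eqref{itform} exactly four times, descending through
\[
I(p,q;5) \to I(p-1,q;4) \to I(p-2,q;3) \to I(p-3,q;2) \to I(p-4,q;1)
\]
with $q$ held fixed. Every step is legitimate: the first argument takes the values $p,p-1,p-2,p-3$, all $\geq 1$, and at each stage $p+q+2-n = p+q-3>0$ since $p\geq 4$, so the hypotheses of \eqref{itform} are met and each denominator $p+q-3$ is nonzero. The product of leading factors telescopes to $\frac{p}{4}\cdot\frac{p-1}{3}\cdot\frac{p-2}{2}\cdot\frac{p-3}{1} = \frac{p(p-1)(p-2)(p-3)}{24}$, which multiplies the base value $I(p-4,q;1)=\frac{(-1)^p(\log(2)-A(p-4))}{p+q-3}$ from Proposition \ref{i1prop} (using $(-1)^{p-4}=(-1)^p$ and $(p-4)+q+1=p+q-3$) to produce exactly the logarithmic term in \eqref{pq5}. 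Collecting the four rational remainders over the common denominator $192(p+q-3)$ and simplifying the resulting polynomial in $p$ yields numerator $-(4p^3-10p^2+8p+3)$, matching \eqref{pq5}.

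The two remaining formulas require genuine care, and this is the one real subtlety. When $p=3$ or $p=2$ the descent reaches first coordinate zero \emph{before} $n$ reaches $1$, and the recursion \eqref{itform} is not valid at $p=0$: in its derivation the $x=0$ endpoint of the boundary term $x^p y^q (x+y)^{1-n}\big|_0^y$ vanishes only when $p\geq 1$. I would therefore first record the direct evaluation, valid for $n\geq 2$ with $q+2-n\neq 0$,
\[
I(0,q;n) = \int_0^1 y^q\Big(\frac{(2y)^{1-n}-y^{1-n}}{1-n}\Big)\,dy = \frac{1-2^{1-n}}{(n-1)(q+2-n)},
\]
obtained by integrating out $x$ explicitly. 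For $p=3$ (where $q>0$ keeps every denominator $q$ nonzero) I apply \eqref{itform} three times to reach $\tfrac{1}{2}I(0,q;2)$, substitute $I(0,q;2)=\tfrac{1}{2q}$, and collect terms to get $\tfrac{1}{64q}$; for $p=2$ (where $q>1$ keeps every denominator $q-1$ nonzero) I apply it twice to reach $\tfrac{1}{3}I(0,q;3)$, substitute $I(0,q;3)=\tfrac{3}{8(q-1)}$, and collect terms to get $\tfrac{5}{192(q-1)}$. The main obstacle is thus not the algebra but recognizing that the naive four-fold iteration breaks down at $p=0$; inserting the correct value of $I(0,q;n)$ there is precisely what repairs the otherwise incorrect sign.
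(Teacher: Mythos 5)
Your proposal is correct and is essentially the paper's own proof: iterate \eqref{itform} four times for $p \geq 4$, telescoping the prefactors to $\frac{p(p-1)(p-2)(p-3)}{24}$ and closing with Proposition \ref{i1prop} via $(p-4)+q+1 = p+q-3$, then reduce the $p=3$ and $p=2$ cases by three and two applications respectively (the paper omits these computations). The one place you go beyond the paper is a genuine improvement in care rather than a different route: after those shorter reductions one lands at $I(0,q;2)$ and $I(0,q;3)$, where \eqref{itform} cannot be applied again --- its derivation needs $p \geq 1$ for the boundary term at $x=0$ to vanish, even though the proposition as stated nominally allows $p = 0$ --- and your direct evaluation $I(0,q;n) = \frac{1-2^{1-n}}{(n-1)(q+2-n)}$ (giving $I(0,q;2)=\frac{1}{2q}$ and $I(0,q;3)=\frac{3}{8(q-1)}$) is exactly the base case the paper's omitted ``straightforward'' computations require.
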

\begin{proof}
If $p \geq 4$, iterating the above formula \eqref{itform} four times, it follows
that
\begin{align}
\begin{split}
I(p,q;5) &= \frac{-1}{p+q-3} \Big( \frac{1}{2^6} + \frac{p}{3 \cdot 2^5}
+ \frac{p(p-1)}{3 \cdot 2^5} + \frac{p(p-1)(p-2)}{3 \cdot 2^4} \Big)\\
&+ \frac{p(p-1)(p-2)(p-3)}{3 \cdot 2^3}I(p-4, q, 1),
\end{split}
\end{align}
which simplifies to 
\begin{align}
\begin{split}
I(p,q;5) &= \frac{-1}{192(p+q-3)} \Big( 4 p^3 - 10 p^2 + 8p +3 \Big)\\
& \ \ + \frac{p(p-1)(p-2)(p-3)}{3 \cdot 2^3} I(p-4, q, 1),
\end{split}
\end{align}
then using Proposition \ref{i1prop}, \eqref{pq5} follows. 
Next, \eqref{3q5} follows by applying \eqref{itform} three times, 
and \eqref{2q5} follows by applying \eqref{itform} two times,
the computations are straightforward and are omitted. 
\end{proof}
Now it is possible compute the required integrals. 
First, rewrite the integral on the square in terms 
of integrals on triangular regions
using the symmetry $(x,y) \mapsto (y,x)$, 
\begin{align}
\begin{split}
\label{symform}
\tilde{f}_{p.q} &= \int_0^1 \int_0^1 f(x,y) x^p y^q dx dy \\
& = \frac{1}{2}  \int_0^1 \int_0^1 f(x,y) (x^p y^q + x^q y^p) dx dy \\
& =  \int_0^1 \int_0^y f(x,y) (x^p y^q + x^q y^p) dx dy\\
& = \int_0^1 \int_0^y f(x,y) x^p y^q dx dy
+ \int_0^1 \int_0^y f(x,y) x^q y^p dx dy.
\end{split}
\end{align}
Notice that the second integral is equal to the first 
integral, but with $p$ and $q$ interchanged. So next compute the integral 
\begin{align}
\begin{split}
\label{apeqn}
&\int_0^1 \int_0^y f(x,y) x^p y^q dx dy
=  \int_0^1 \int_0^y  \frac{ 8x^2 y^2 ( 5 x^2 - 8 xy + 5 y^2)}{9(x+y)^5} x^p y^q dx dy \\
& = \frac{8}{9} \Big(  5 I(p+4, q+2;5) - 8 I(p+3,q+3;5) + 5 I (p+2, q+4;5) \Big).
\end{split}
\end{align}
Using Proposition \ref{ipqform}, the first term on the second line of \eqref{apeqn} is 
\begin{align}
\begin{split}
\label{fr1}
& \frac{8}{9} \Big( 5 I(p+4, q+2;5) \Big) \\
&=
\frac{8}{9(p+q+3)} \Bigg\{  5 \Bigg( 
\frac{-1}{192} \Big( 4 (p+4)^3 - 10 (p+4)^2 + 8(p+4) +3 \Big)\\
& \ \ + \frac{1}{24} (p+4)(p+3)(p+2)(p+1) (-1)^p (  \log(2) - A(p))
\Bigg) \Bigg\}.
\end{split}
\end{align}
If $p \geq 1$,  using Proposition \ref{ipqform}, the second term on the 
second line of \eqref{apeqn} is  
\begin{align}
\begin{split}
\label{fr2}
& \frac{8}{9} \Big(  - 8 I(p+3,q+3;5)  \Big) \\
&=
\frac{8}{9(p+q+3)} \Bigg\{ -8 \Bigg(
\frac{-1}{192} \Big( 4 (p+3)^3 - 10 (p+3)^2 + 8(p+3) +3 \Big)\\
& \ \ + \frac{1}{24}(p+3)(p+2)(p+1)p (-1)^p (  \log(2) - A(p-1))
\Bigg) \Bigg\}.
\end{split}
\end{align}
If $p \geq 2$,  using Proposition \ref{ipqform}, the third term on the 
second line of \eqref{apeqn} is 
\begin{align}
\begin{split}
\label{fr3}
& \frac{8}{9} \Big( 5 I (p+2, q+4;5) \Big) \\
&=
\frac{8}{9(p+q+3)} \Bigg\{ 5 \Bigg(
\frac{-1}{192} \Big( 4 (p+2)^3 - 10 (p+2)^2 + 8(p+2) +3 \Big)\\
& \ \ + \frac{1}{24}(p+2)(p+1)(p)(p-1) (-1)^p (  \log(2) - A(p-2))
\Bigg)
\Bigg\}.
\end{split}
\end{align}
Using the formulas 
\begin{align}
A(p-1) &= A(p) + (-1)^p \frac{1}{p},\\
A(p-2) & = A(p) + (-1)^{p-1} \frac{1}{p-1} + (-1)^p \frac{1}{p},
\end{align}
and adding together \eqref{fr1}, \eqref{fr2}, and \eqref{fr3}, 
after some algebraic simplification, it follows that 
\begin{align}
\begin{split}
\label{stfo}
& \frac{8}{9} \Big(  5 I(p+4, q+2;5) - 8 I(p+3,q+3;5) + 5 I (p+2, q+4;5) \Big) \\ 
&= \frac{1}{18} \frac{1}{p + q +3} \Big\{ 
-\frac{109}{2} - 68 p - 33 p^2  - 6 p^3  \\
&+ 4(-1)^{p}  (p+1) (p+2) (3p^2 + 9p +10)\big( \log(2) - A(p)\big) \Big\},
\end{split}
\end{align}
which holds assuming that $p \geq 2$, $q \geq 0$.

Next consider the case $p=0$. Using Proposition \ref{ipqform}
it is easy to verify that the left hand side of \eqref{stfo} is equal to 
\begin{align*}
\frac{ - 109}{36(q+3)} + \frac{80 \log(2)}{18(q+3)},
\end{align*}
which is easily seen to be the same as the right hand side of \eqref{stfo}. 

For the case $p=1$,  using Proposition \ref{ipqform}
the left hand side of \eqref{stfo} is easily verified to be equal to 
\begin{align*}
\frac{ 733}{36(q+4)} - \frac{88 \log(2)}{3(q+4)},
\end{align*}
which is easily seen to be the same as the right hand side of \eqref{stfo}.

Consequently, 
\eqref{stfo} holds for all $p \geq 0$ and $q \geq 0$. 
From \eqref{symform}, the desired formula \eqref{tfpq} is
obtained by symmetrizing this expression in $p$ and $q$.

\bibliography{Mass_references}

\end{document}